\newtheorem{rem}{Remark}
\newtheorem{obs}{Observation}
\newtheorem{thm}{Theorem}%[section]
\newtheorem{prop}{Proposition}%[section]
\newtheorem{cor}{Corollary}%[section]
\newtheorem{lem}{Lemma}%[section]
\begin{document}

% Your \newcommands below (if there are any):

\newcommand{\coro}{\textrm{cor}}

\oddsidemargin 16.5mm
\evensidemargin 16.5mm

\thispagestyle{plain}

%\begin{center}
%{\large \sc  Applicable Analysis and Discrete Mathematics}

%{\small available online at  http:/$\!$/pefmath.etf.rs }
%\end{center}

%\noindent{\small{\sc  Appl. Anal. Discrete Math.\ }{\bf x} (xxxx),
%xxx--xxx.} \hfill{\scriptsize doi:10.2298/AADMxxxxxxxx}

%\vspace{5cc}
\begin{center}

{\large\bf  PERFECT AND QUASIPERFECT DOMINATION IN TREES
\rule{0mm}{6mm}\renewcommand{\thefootnote}{}%Enter at least one, but not more than 3 MSCs.
% First entered MSC will be a primary one, others (at most 2) will be secondary.
\footnotetext{\scriptsize 2010 Mathematics Subject Classification. 05C69.

\rule{2.4mm}{0mm}Keywords and Phrases. Domination, perfect domination, quasiperfect domination,
trees.
}}

\vspace{1cc}
{\large\it Jos\'e C\'aceres, Carmen Hernando, Merc\`{e} Mora, Ignacio M. Pelayo and Mar\'ia Luz Puertas}

\vspace{1cc}
\parbox{24cc}{{\small

A $k-$quasiperfect dominating set ($k\ge 1$) of a graph $G$ is a vertex subset $S$ such that every vertex not
in $S$ is adjacent to at least one and at most k vertices in $S$. The cardinality of a minimum k-quasiperfect
dominating set in $G$ is denoted by $\gamma_{\stackrel{}{1k}}(G)$. Those sets were first
introduced by Chellali et al. (2013) as a generalization of the perfect domination concept. The quasiperfect domination chain $\gamma_{\stackrel{}{11}}(G)\ge\gamma_{\stackrel{}{12}}(G)\ge\dots\ge\gamma_{\stackrel{}{1\Delta}}(G)=\gamma(G)$, indicates what it is lost in size when you move towards a more perfect domination. We provide an upper bound for $\gamma_{\stackrel{}{1k}}(T)$ in any tree $T$ and trees achieving this bound are characterized. We prove that there exist trees satisfying all the possible equalities and inequalities in this chain and a linear algorithm for computing $\gamma_{\stackrel{}{1k}}(T)$ in any tree is presented.
}}
\end{center}

\vspace{1cc}

\vspace{1.5cc}
\begin{center}
{\bf 1. INTRODUCTION}
\end{center}
\vspace{1cc}

All the graphs considered here are finite, undirected, simple, and connected. For undefined basic concepts we refer the reader to introductory graph theoretical literature as~\cite{chlepi11}. Recall that a \emph{tree} is a connected acyclic graph. A \emph{leaf} is a vertex of degree 1 and vertices of degree at least $2$ are \emph{interior} vertices. A \emph{support vertex} is a vertex having at least one leaf in its neighborhood and a \emph{strong support vertex} is a support vertex adjacent to at least two leaves.

Given a graph $G$, a subset $S$ of its vertices is a \emph{dominating set} of $G$ if every vertex $v$ not in $S$ is adjacent to at least one vertex in $S$. The \emph{domination number} $\gamma(G)$ is the minimum cardinality of a dominating set of $G$, and a dominating set of cardinality $\gamma(G)$ is called a \emph{$\gamma$-code}~\cite{hahesl}.

An extreme way of domination occurs when every vertex not in $S$ is adjacent to exactly one vertex in $S$. In that case, $S$ is called a \emph{perfect dominating set}~\cite{cohahela93} and $\gamma_ {\stackrel{}{11}}(G)$, the minimum cardinality of a perfect dominating set of $G$, is the \emph{perfect domination number}. A dominating set of cardinality $\gamma_ {\stackrel{}{11}}(G)$ is called a \emph{$\gamma_ {\stackrel{}{11}}$-code}. This concept has been studied in~\cite{dej08,dejdel09,kwle,listo90}.

In a perfect dominating set what it is gained from the point of view of prefection it is lost in size, comparing it with a dominating set. Between both notions there is a graduation of definitions given by the so-called $k$-quasiperfect domination. A \emph{$k$-quasiperfect dominating set} for $k\geq 1$ (\emph{$\gamma_ {\stackrel{}{1k}}$-set} for short)~\cite{chhahemc13,yang} is a dominating set $S$ such that every vertex not in $S$ is adjacent to at most $k$ vertices of $S$. Again the \emph{$k$-quasiperfect domination number} $\gamma_ {\stackrel{}{1k}}(G)$ is the minimum cardinality of a $\gamma_ {\stackrel{}{1k}}$-set of $G$ and a \emph{$\gamma_ {\stackrel{}{1k}}$-code} is a $\gamma_ {\stackrel{}{1k}}$-set of cardinality $\gamma_ {\stackrel{}{1k}}(G)$. This problem has been recently studied in \cite{bidughpa14,xuba15}.

Given a graph $G$ of order $n$ and maximum degree $\Delta$, $\gamma_ {\stackrel{}{1\Delta}}$-sets are precisely dominating sets. Thus, one can construct the following chain of quasiperfect domination parameters that we will call the \emph{QP-chain} of $G$:

\begin{equation*}
n \ge \gamma_ {\stackrel{}{11}}(G) \ge \gamma_ {\stackrel{}{12}}(G)\ge \ldots \ge \gamma_ {\stackrel{}{1\Delta}}(G)=\gamma(G)
\end{equation*}

We began the study of the QP-chain in \cite{cahemopepu} and now our attention is focused in the behavior of these parameters in the particular case of trees. This paper is organized as follows. In the next Section basic and known results about quasiperfect parameters are recalled. In Section 3 a general upper bound for the quasiperfect domination number in terms of the domination number is obtained. Section 4 is devoted to study the QP-chain, introducing a realization-type theorem for it. Finally, in Section 5 we provide an algorithm to compute the $k$-quasiperfect domination number of a tree in linear time.

%%%% known results of graphs and trees

\vspace{1.5cc}
\begin{center}
{\bf 2. BASIC AND GENERAL RESULTS}
\end{center}
\vspace{1cc}

In this Section we recall some known results about domination and perfect domination. The \emph{corona} of a graph $G$, denoted by $\coro (G)$, is the graph obtained by attaching a leaf to each vertex of $G$. It is well known that corona graphs achieve the maximum value of domination number.

\begin{thm} \cite{fijakiro85,paxu82}
For any graph $G$ the domination number satisfies $\gamma(G)\le n/2$. Moreover if $G$ is a graph of even order $n$, then
$\gamma (G)=n/2$ if and only if $G$ is the cycle of order 4 or the corona of a connected graph.
\end{thm}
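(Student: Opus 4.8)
\medskip
\noindent\textbf{Proof proposal.}
This statement is Ore's classical bound together with the Payan--Xuong / Fink--Jacobson--Kinch--Roberts description of the extremal graphs, so the plan is to treat the inequality and the two directions of the characterization separately. For the inequality, the single fact I would isolate first is: \emph{if a graph $H$ has no isolated vertices and $D$ is a minimal dominating set of $H$, then $V(H)\setminus D$ is again a dominating set.} To prove it I would take $v\in D$ with $N(v)\subseteq D$, choose a neighbour $w\in N(v)\cap D$ (it exists since $v$ is not isolated), use minimality of $D$ to obtain a vertex $x$ that is not dominated by $D\setminus\{v\}$, and check the two cases $x=v$ and $x\notin D$ — each contradicts the existence of $w$. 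Hence every vertex of $D$ has a neighbour outside $D$, so $V(H)\setminus D$ dominates. Since a connected graph of order $n\ge 2$ has no isolated vertices, applying this to a minimum dominating set $D$ of $G$ gives $\gamma(G)\le|V(G)\setminus D|=n-\gamma(G)$, i.e. $\gamma(G)\le n/2$.

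For the easy direction of the characterization I would simply compute: $\gamma(C_4)=2=n/2$; and if $G=\coro(H)$ with $|V(H)|=m$, then $n=2m$, the $m$ leaves together with their supports form $m$ pairwise disjoint pairs each of which must meet every dominating set, so $\gamma(G)\ge m$, while $V(H)$ is itself a dominating set, whence $\gamma(G)=m=n/2$.

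The substantial part is to show these are the only extremal graphs. I would start with a minimum dominating set $D$ of a connected $G$ with $n$ even and $\gamma(G)=n/2$; by the lemma above $\bar D:=V(G)\setminus D$ also dominates, and since $|\bar D|=n/2=\gamma(G)$ it is again minimum, hence minimal. Minimality of $D$ and of $\bar D$ produces private neighbours: setting $A=\{v\in D:N(v)\cap D=\emptyset\}$ and $B=\{u\in\bar D:N(u)\cap\bar D=\emptyset\}$, each $v\in D\setminus A$ has a private neighbour $f(v)\in\bar D$ with $N(f(v))\cap D=\{v\}$, and symmetrically one obtains $g:\bar D\setminus B\to D$; both maps are injective because a private neighbour determines its unique dominator. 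Moreover, since $\bar D$ dominates $G$, every vertex of $D$ has a neighbour in $\bar D$ and conversely. The plan is then to combine $|D|=|\bar D|=n/2$ with the injectivity of $f$ and $g$ to force both to be bijections, to deduce that every vertex of $\bar D$ has a unique neighbour in $D$ and that distinct vertices of $\bar D$ have distinct such neighbours, and — after possibly moving a vertex from one side to the other to destroy an edge inside $\bar D$ — to recognize $G$ as $\coro(G[D])$, with $G[D]$ connected because $G$ is. The configuration in which no such re-choice removes all edges inside $\bar D$ is exactly where $G=C_4$ appears.

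I expect this last step to be the main obstacle. The delicate points are: (i) forbidding a vertex of $\bar D$ from having two neighbours in $D$ except in the $C_4$ configuration, which needs a short argument on the length of the cycle such a vertex would create together with its dominators and their private neighbours; and (ii) showing that any edge inside $\bar D$ can be eliminated by a size-preserving exchange between $D$ and $\bar D$. The hypothesis $\gamma(G)=n/2$ is used precisely in the counting that pins $f$ and $g$ down to bijections, so that all the non-pendant structure of $G$ is carried by $G[D]$.
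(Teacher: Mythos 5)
Note first that the paper does not prove this statement at all: it is quoted as Theorem 1 with citations to Fink--Jacobson--Kinch--Roberts and Payan--Xuong, so there is no in-paper argument to compare against; the only question is whether your proposal is a complete proof, and it is not. Your treatment of the inequality is sound (the lemma that the complement of a minimal dominating set dominates when there are no isolated vertices is exactly the classical route, and you correctly note that connectivity with $n\ge 2$ supplies the no-isolated-vertices hypothesis), and the ``if'' direction for $C_4$ and for $\coro(H)$ is a routine computation that you carry out correctly.

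The genuine gap is the ``only if'' direction, which is the entire substance of the cited theorem, and your text stops at a plan precisely where the work begins. Two concrete problems: first, the counting you invoke does not by itself pin $f$ and $g$ down to bijections, because $f$ is defined only on $D\setminus A$ and $g$ only on $\bar D\setminus B$; injectivity plus $|D|=|\bar D|=n/2$ gives no contradiction until you bring in a separate analysis of the exceptional sets $A$ and $B$ (vertices dominated only from the other side), and that analysis is not sketched. Second, the two steps you yourself flag as delicate --- (i) excluding a vertex of $\bar D$ with two neighbours in $D$ outside the $C_4$ configuration, and (ii) eliminating edges inside $\bar D$ by a size-preserving exchange --- are exactly where the published proofs need a careful case analysis (or an induction on components after deleting a suitable pair), and nothing in your outline guarantees that the exchange in (ii) preserves the property that the complement still dominates, nor that the process terminates without recreating the configuration of (i). As written, the characterization is asserted rather than proved, so the proposal establishes only the bound and the easy direction.
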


Graphs with odd order $n$ and maximum domination number $\gamma(G)=\lfloor n/2 \rfloor$ are also completely characterized in \cite{bacohahesh}, as a list of six graph classes.

On the other hand, it is clear that for every graph $G$ of order $n\ge 3$ with $n_1$ vertices of degree 1, $\gamma_{\stackrel{}{11}} (G) \le n- n_1$, since the set of all vertices that are no leaves is a $\gamma_{\stackrel{}{11}}$-set. This property leads to the following observations for trees.

\begin{obs}
If $T$ is a tree of order at least $3$, there exists a $\gamma$-code containing no leaves, since the set obtained by removing a leaf and adding its support vertex, if necessary, is also a dominating set.
\end{obs}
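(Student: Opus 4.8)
The plan is to run a straightforward exchange argument: start from an arbitrary $\gamma$-code and repeatedly trade leaves for their support vertices. Precisely, among all $\gamma$-codes of $T$ I would pick one, call it $D$, having the smallest number of leaves, and then show that this minimum is $0$.

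First I would record the elementary fact that makes the hypothesis $n\ge 3$ relevant: in a tree of order at least $3$, every support vertex is an interior vertex. Indeed, if $s$ is the support vertex of a leaf $\ell$ and $\deg(s)=1$, then $s\ell$ is the whole tree, contradicting $n\ge 3$; hence $\deg(s)\ge 2$.

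Next, suppose for contradiction that $D$ contains a leaf $\ell$, and let $s$ be its support vertex. I would first observe that $s\notin D$: otherwise $\ell$ is dominated by $s$, and since $s$ is the only neighbour of $\ell$ no other vertex needs $\ell$ for domination, so $D\setminus\{\ell\}$ is still dominating, contradicting the minimality of $|D|$. Therefore $s\notin D$, and I set $D'=(D\setminus\{\ell\})\cup\{s\}$, so $|D'|=|D|$. The set $D'$ is dominating: both $\ell$ and $s$ are dominated by $s\in D'$; every vertex that was dominated in $D$ by some vertex other than $\ell$ is still dominated since that vertex lies in $D\setminus\{\ell\}\subseteq D'$; and the only vertices that could have relied on $\ell$ alone lie in $N[\ell]=\{\ell,s\}$, both of which are now handled. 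Hence $D'$ is again a $\gamma$-code, but it contains one fewer leaf than $D$ — we removed the leaf $\ell$ and inserted the interior vertex $s$ — contradicting the choice of $D$. Thus $D$ has no leaves, which is the claim.

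I do not expect a genuine obstacle here, since this is essentially a one-step exchange; the only thing needing a little care is the bookkeeping in degenerate configurations (a strong support vertex carrying several leaves of $D$, or a leaf whose support already belongs to $D$). The first is absorbed by iterating the swap until the leaf count drops to $0$, and the second was shown above to be impossible for a $\gamma$-code because it would contradict minimality of the cardinality.
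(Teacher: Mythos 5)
Your proposal is correct and follows essentially the same route as the paper, which justifies the observation by the very same leaf-for-support exchange ("removing a leaf and adding its support vertex, if necessary"). You merely formalize it by taking a $\gamma$-code with the fewest leaves and checking the two degenerate cases (support already in the code, several leaves on one support), which is sound bookkeeping but not a different argument.
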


\begin{obs}
Any $\gamma$-code of a tree contains all its strong support vertices. Suppose on the contrary that $v$ is a strong support vertex not in a $\gamma$-code $S$, then $S$ must contain at least two leaves adjacent to $v$, but the set $(S\setminus \{x,y \}) \cup \{ v \}$ is a dominating set with less vertices than $S$, which is not possible.
\end{obs}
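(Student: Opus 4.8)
The plan is to argue by contradiction using a standard exchange argument. Suppose $T$ is a tree, $S$ is a $\gamma$-code of $T$, and, contrary to the claim, some strong support vertex $v$ of $T$ is not in $S$. Because $v$ is a strong support vertex, it has two distinct leaf neighbours, say $x$ and $y$.

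First I would pin down that $x,y\in S$: each of $x$ and $y$ is a leaf whose only neighbour is $v$, and since $v\notin S$ while $S$ dominates $T$, the vertex $x$ can be dominated only by itself, so $x\in S$, and similarly $y\in S$.

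Then I would perform the exchange. Put $S'=(S\setminus\{x,y\})\cup\{v\}$, so $|S'|=|S|-1$, and check that $S'$ still dominates $T$. The only vertices whose domination could be affected by deleting $x$ and $y$ from $S$ are $x$ and $y$ themselves and their neighbours; but the unique neighbour of each of $x$ and $y$ is $v$, which now lies in $S'$, so $v$, $x$ and $y$ are all dominated by $v\in S'$, while every other vertex was and remains dominated by a vertex of $S\setminus\{x,y\}\subseteq S'$. Hence $S'$ is a dominating set with $|S'|<|S|=\gamma(T)$, contradicting the minimality of $S$. Therefore $v\in S$, and since $v$ was an arbitrary strong support vertex, the statement follows.

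There is essentially no obstacle here: the only step needing a little care is the verification that the exchange preserves domination, and that rests entirely on the structural fact that a leaf can dominate only its support vertex, so swapping the leaf neighbours of $v$ for $v$ itself can never un-dominate anything while it strictly shrinks the set. The argument works verbatim when $v$ has more than two leaf neighbours (replace all of them by $v$), but two leaves already suffice to reach a contradiction.
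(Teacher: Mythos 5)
Your proposal is correct and follows essentially the same exchange argument as the paper: the leaves adjacent to $v$ must lie in $S$ because $v\notin S$, and replacing two of them by $v$ yields a smaller dominating set, a contradiction. Your write-up is in fact slightly more careful than the paper's, since you explicitly verify that the swap preserves domination of every vertex.
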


Similar results are known for the perfect domination number of trees.

\begin{prop} \cite{cahahe12} Let $T$ be a tree of order $n\ge 3$. Then
\begin{enumerate}
  \item  Every $\gamma_{\stackrel{}{11}}-code$ of $T$ contains all its strong support vertices.
  \item  $\gamma_{\stackrel{}{11}} (T)\le n/2$.
  \item  $\gamma_{\stackrel{}{11}} (T) = n/2$ if and only if $T=\coro (T')$ for some tree $T'$.
\end{enumerate}
\end{prop}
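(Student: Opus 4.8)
The plan is to dispatch parts 1 and 2 first, since both feed into part 3.

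\emph{Part 1} I would prove directly from perfectness. Let $v$ be a strong support vertex with two leaf‑neighbours $x,y$, and suppose some $\gamma_{\stackrel{}{11}}$-code $S$ omits $v$. Since the only neighbour of $x$ is $v\notin S$, the vertex $x$ must itself lie in $S$ to be dominated; likewise $y\in S$. But then $v\notin S$ has the two neighbours $x,y$ inside $S$, contradicting that each vertex outside $S$ has exactly one neighbour in $S$. Hence $v\in S$. \emph{Part 2} I would prove by induction on $n$, checking $n=3,4$ directly (for $n=4$ the value $n/2$ is attained exactly by $P_4=\coro(P_2)$). For $n\ge 5$, fix a longest path $v_0v_1\dots v_d$; maximality forces every neighbour of $v_1$ other than $v_2$ to be a leaf. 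If $d\le 2$, then $T$ is a star and $\gamma_{\stackrel{}{11}}(T)=1\le n/2$. If $d\ge 3$ and $v_1$ is a strong support vertex, delete all $\ell\ge 2$ leaves of $v_1$; the resulting tree $T'$ still contains $v_1,v_2,v_3$, so it has order at least $3$, and $v_1$ is now a leaf of $T'$ at $v_2$. A $\gamma_{\stackrel{}{11}}$-code $S'$ of $T'$ either contains $v_1$, in which case it is already a perfect dominating set of $T$, or it does not, in which case $v_1$ is dominated in $T'$ only by $v_2\in S'$ and $S'\cup\{v_1\}$ works in $T$; either way the cardinality is at most $(n-\ell)/2+1\le n/2$. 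If $d\ge 3$ and $\deg(v_1)=2$, then $v_0$ is the unique leaf at $v_1$; delete $\{v_0,v_1\}$ to get $T'$ of order $n-2\ge 3$, take a $\gamma_{\stackrel{}{11}}$-code $S'$, and extend it by $v_1$ when $v_2\in S'$ and by $v_0$ when $v_2\notin S'$. A routine check shows that in each case one obtains a perfect dominating set of $T$ of size $|S'|+1\le n/2$.

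For \emph{part 3}, the implication $(\Leftarrow)$ is short: if $T=\coro(T')$ and $S$ is any perfect dominating set, then for each $w\in V(T')$ with pendant leaf $w'$, the fact that $w'$ has only the neighbour $w$ forces $w\in S$ or $w'\in S$; thus $S$ meets each of the $|V(T')|=n/2$ disjoint pairs $\{w,w'\}$, so $|S|\ge n/2$, and together with part 2 this gives equality. For $(\Rightarrow)$ I would prove the contrapositive: a tree $T$ of order $n\ge 3$ that is not a corona satisfies $\gamma_{\stackrel{}{11}}(T)<n/2$. I would first record that, for $n\ge 3$, $T$ is a corona precisely when every interior vertex has exactly one leaf‑neighbour (the leaf‑to‑support map is then a bijection onto the interior vertices and $T=\coro(T-\{\text{leaves}\})$); hence ``not a corona'' means $T$ has a strong support vertex or an interior vertex with no leaf‑neighbour. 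Since $n/2$ is not an integer for odd $n$, part 2 already forces $\gamma_{\stackrel{}{11}}(T)<n/2$ there, so only even $n$ needs work, and I would induct on $n$. Using a longest path as in part 2, I would delete a pendant broom at $v_1$ in the strong‑support case, or the pendant $P_2$ $v_0v_1$ in the degree‑$2$ case, to obtain a tree $T'$. If $T'$ is not a corona, induction gives $\gamma_{\stackrel{}{11}}(T')<|V(T')|/2$, and the one‑vertex extension from part 2 keeps the total strictly below $n/2$.

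The main obstacle is the remaining case, in which $T'$ \emph{is} a corona $\coro(T'')$: then the part‑2 bound is attained and the naive extension is off by one. There I would examine the attachment vertex $v_2$ inside $\coro(T'')$. If $v_2\in V(T'')$, reattaching the deleted part turns $T$ itself into a corona, contradicting the hypothesis. If $v_2$ is a pendant leaf of $T'$ sitting on some $u\in V(T'')$, then $u$ has become an interior vertex of $T$ with no leaf‑neighbour, and one shows $\gamma_{\stackrel{}{11}}(T)<n/2$ by one of two moves: when $u$ is a leaf of $T''$, the explicit set $(V(T'')\setminus\{u\})\cup\{v_1\}$ is a perfect dominating set of size $|V(T'')|<n/2$; otherwise one deletes the whole pendant substructure $\{v_0,v_1,v_2\}$ (or $\{v_0,v_1,v_2,u\}$, splitting into smaller trees when removing $u$ disconnects $T$), applies the induction hypothesis to the pieces, and reassembles a perfect dominating set of size below $n/2$ after adding back a single vertex. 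I expect the bookkeeping of exactly which vertex to add back (so that $v_2$ or $u$ is not over‑dominated) to be the delicate part. Once this case analysis is complete, the contrapositive is established, and as noted above equality $\gamma_{\stackrel{}{11}}(T)=n/2$ then forces $T$ to be a corona.
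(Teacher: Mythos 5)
Your parts 1 and 2 are correct, and so is the backward implication of part 3; note that the paper itself does not prove this proposition (it is quoted from the cited reference), so your argument stands on its own. The genuine gap is in the forward direction of part 3, in the strong-support branch of your induction. There the deleted part is the bundle of $\ell$ leaves at $v_1$ (or the whole broom, on the other reading), so when $T'$ turns out to be a corona $\coro(T'')$ the attachment vertex is $v_1$, which is then a corona \emph{pendant} of $v_2$ -- and your corona-case dichotomy does not apply: reattaching the deleted leaves does \emph{not} make $T$ a corona (it makes $v_1$ a strong support vertex, so $T$ is legitimately non-corona and no contradiction arises), and your two "moves" are written for the degree-$2$ configuration ($v_0,v_1,v_2$ with $v_2$ a pendant on $u$), which simply does not exist here. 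The case is not vacuous: take $T''=P_2$, so $T'=\coro(P_2)=P_4$ with vertices $a'\,a\,b\,b'$, and attach two leaves to the pendant $b'$; then $n=6$, $T$ is not a corona, $v_1=b'$ is a strong support vertex, $\ell=2$, and $T'$ is a corona, so your written argument only yields $\gamma_{\stackrel{}{11}}(T)\le n/2$, not the strict inequality. The case is true and fixable -- for a corona $T'$ the set $L(T')$ of all pendants is a $\gamma_{\stackrel{}{11}}$-code of $T'$ that contains the pendant $v_1$, so the "code containing $v_1$" alternative from your part 2 applies verbatim and gives a perfect dominating set of $T$ of size $(n-2)/2<n/2$ (in the example, $\{a',b'\}$) -- but this choice-of-code argument is missing from your plan, and without it the induction does not close.

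By contrast, your handling of the degree-$2$ branch when $T'=\coro(T'')$ is essentially sound: if $v_2\in V(T'')$ then $T$ really is a corona (contradiction), your explicit set $(V(T'')\setminus\{u\})\cup\{v_1\}$ works when $u$ is a leaf of $T''$, and the remaining bookkeeping you worry about can be done cleanly without ever deleting $u$ or splitting $T$: remove only $\{v_0,v_1,v_2\}$, note the resulting tree has odd order $n-3$ so part 2 alone gives a code $S'''$ with $|S'''|\le (n-4)/2$, and add back $v_1$ if $u\notin S'''$ and $v_0$ if $u\in S'''$; in either case no vertex is over-dominated and the total is $(n-2)/2<n/2$. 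So the only substantive repair needed is the strong-support/corona case described above.
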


The following corollary is a consequence of the preceding results.

\begin{cor}\label{equality}
  If $T$ is a tree of order $n\ge 3$, the following conditions are equivalent:
  \begin{enumerate}
    \item $\gamma (T)= n/2$.
    \item $\gamma_{\stackrel{}{11}} (T)= n/2$.
    \item $T=\coro (T')$, for some  tree $T'$.
  \end{enumerate}
\end{cor}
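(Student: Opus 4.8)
The plan is to deduce this corollary from Theorem~1 and Proposition~1, using condition~(3) as the hub: I will establish $(1)\Leftrightarrow(3)$ and $(2)\Leftrightarrow(3)$. A preliminary observation is that each of (1) and (2) forces $n$ to be even, because $\gamma(T)$ and $\gamma_{\stackrel{}{11}}(T)$ are integers that, by Theorem~1 and Proposition~1(2) respectively, never exceed $n/2$; hence an equality with $n/2$ can occur only when $n$ is even. This makes the even-order hypothesis of Theorem~1 available whenever it is needed.

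For $(1)\Leftrightarrow(3)$ I would argue as follows. If $\gamma(T)=n/2$, then $n$ is even and Theorem~1 gives that $T$ is either $C_4$ or the corona of a connected graph; since $T$ is acyclic it cannot be $C_4$, so $T=\coro(G)$ with $G$ connected. The graph $G$ is obtained from the tree $T$ by deleting the $|V(G)|$ pendant vertices introduced by the corona operation (for $n\ge 3$ these are exactly the leaves of $T$, since every original vertex then has degree at least $2$ in $T$), and deleting leaves from a tree yields a tree, so $G$ is a tree and (3) holds. Conversely, if $T=\coro(T')$ with $T'$ a tree, then $T'$ is connected, $T$ has even order, and Theorem~1 gives $\gamma(T)=n/2$, i.e. (1). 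The equivalence $(2)\Leftrightarrow(3)$ is precisely Proposition~1(3), so nothing further is required there; alternatively, once $(1)\Leftrightarrow(3)$ is in hand, the implication $(1)\Rightarrow(2)$ is immediate from the QP-chain inequalities $\gamma(T)\le\gamma_{\stackrel{}{11}}(T)\le n/2$.

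I do not expect a genuine obstacle, since the mathematical substance is supplied by the two quoted results and the work lies entirely in the glue. The only points that need a careful word are: excluding $C_4$ in Theorem~1's dichotomy because trees are acyclic; checking that within the class of trees ``corona of a connected graph'' and ``corona of a tree'' coincide, which amounts to the stability of treeness under leaf deletion; and recording that an equality with $n/2$ forces $n$ to be even so that Theorem~1 is applicable in the first place.
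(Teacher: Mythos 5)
Your proposal is correct and follows essentially the same route the paper intends: the paper states the corollary as an immediate consequence of Theorem~1 and Proposition~1, which is exactly the glue you supply (excluding $C_4$ by acyclicity, noting the base graph of the corona is itself a tree, and invoking Proposition~1(3) for the perfect-domination equivalence). Your added remarks on parity and on treeness being preserved under leaf deletion are just the routine details the paper leaves implicit.
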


This corollary shows that the QP-chain adopts its shortest form in graphs which are the corona of a tree, for instance in the comb graph $\coro(P_m)$ with $m\geq 3$ which is the corona of the path $P_{m}$. In this case $n/2=m=\gamma_{\stackrel{}{11}}(\coro(P_m))=\gamma(\coro (P_m))$. There are other simple tree families having a constant QP-chain. For instance any path $P_n$ satisfies
$$\gamma_{\stackrel{}{11}}(P_n)=\gamma_{\stackrel{}{12}} (P_n) =\gamma (P_n)=\lceil n/3 \rceil.$$
A star $K_{1,n-1}$ with $n$ vertices and maximum degree $n-1$, satisfies $$\gamma_{\stackrel{}{11}} (K_{1,n-1})=\gamma_{\stackrel{}{12}} (K_{1,n-1}) =\dots =\gamma_{\stackrel{}{1, n-1}}(K_{1,n-1})=\gamma (K_{1,n-1})=1.$$

Finally, recall that a \emph{caterpillar} is a tree that has a dominating path. This special class of trees has a particular behavior regarding the QP-chain.

\begin{prop} \cite{chhahemc13}\label{equality_caterpillar}
If $T$ is a caterpillar, then $\gamma (T)=\gamma_{\stackrel{}{12}} (T)$.
\end{prop}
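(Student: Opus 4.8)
The plan is to show that from any $\gamma$-code of a caterpillar $T$ one can build a dominating set of the same size in which every vertex outside the set has at most two neighbors inside it; since $\gamma_{12}(T) \le \gamma_{11}(T)$ always holds and $\gamma(T) \le \gamma_{12}(T)$ by the QP-chain, exhibiting such a set forces $\gamma_{12}(T) = \gamma(T)$.

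First I would fix a dominating path $P = v_1 v_2 \dots v_m$ of $T$, so that every vertex of $T$ is either on $P$ or is a leaf adjacent to some $v_i$. By Observation 1 I may take a $\gamma$-code $S$ of $T$ containing no leaves, hence $S \subseteq \{v_1,\dots,v_m\}$. Now the only vertices that could have too many neighbors in $S$ are the spine vertices $v_i \notin S$: a leaf has degree $1$, so it has at most one neighbor in $S$ automatically. A spine vertex $v_i \notin S$ has neighbors only among $v_{i-1}, v_{i+1}$ and its pendant leaves; since $S$ contains no leaves, the neighbors of $v_i$ in $S$ lie in $\{v_{i-1}, v_{i+1}\}$, giving at most two. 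Therefore $S$ is already a $2$-quasiperfect dominating set, and $\gamma_{12}(T) \le |S| = \gamma(T)$; combined with $\gamma_{12}(T)\ge\gamma(T)$ from the QP-chain, equality follows.

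The only subtle point is whether Observation 1 really guarantees a leaf-free $\gamma$-code: the argument there removes a leaf $\ell$ and, if $\ell$ was the unique dominator of its support vertex, adds that support vertex, keeping the cardinality unchanged and preserving domination. Iterating this over all leaves in the code terminates and yields a $\gamma$-code $S\subseteq V(T)\setminus L(T)$, where $L(T)$ is the leaf set. (One should note the harmless degenerate cases: if $T$ itself is a star $K_{1,m-1}$ the dominating path is a single edge and the claim is immediate, and if $m=1$ then $T$ is trivial.) So the main — indeed only — obstacle is purely bookkeeping: confirming that the leaf-removal swap never increases the code size and never destroys the dominating property, which is exactly what Observation 1 asserts. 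Once that is in hand there is no further work: the degree restriction on leaves does all the rest for free.
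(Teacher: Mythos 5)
Your proof is correct. Note that the paper itself gives no argument for this proposition: it is stated as a result imported from the reference of Chellali, Haynes, Hedetniemi and McRae, so there is no in-paper proof to compare against line by line. Your argument is the natural self-contained one and is sound: in a tree, any vertex off a dominating path must be a pendant leaf attached to exactly one spine vertex (two attachments, or a further off-path neighbour, would create a cycle), so a leaf-free $\gamma$-code supplied by the paper's Observation~1 lies entirely on the spine; a leaf then has at most one neighbour in the code for trivial degree reasons, and a spine vertex outside the code meets the code only in $\{v_{i-1},v_{i+1}\}$, hence in at most two vertices. This exhibits a dominating set of size $\gamma(T)$ that is already a $\gamma_{\stackrel{}{12}}$-set, and the QP-chain inequality $\gamma_{\stackrel{}{12}}(T)\ge\gamma(T)$ closes the argument. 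Your handling of the degenerate cases (trivial tree, $K_2$, stars) and of the termination of the leaf-swap in Observation~1 is adequate; the aside about $\gamma_{\stackrel{}{12}}(T)\le\gamma_{\stackrel{}{11}}(T)$ is harmless but plays no role and could be dropped.
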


\newpage
%\vspace{1.5cc}
\begin{center}
{\bf 3. BOUNDS FOR QUASIPERFECT DOMINATION IN TREES}
\end{center}
\vspace{1cc}

\noindent{\bf 3.1 General upper bound}
\vspace{1cc}

The QP-chain shows that the domination number $\gamma(T)$ is a natural lower bound of the quasiperfect domination number $\gamma_{\stackrel{}{1k}}(T)$. Furthermore, this bound can be reached, for instance the path $P_n$ satisfies $\gamma_{\stackrel{}{11}}(P_n)=\gamma_{\stackrel{}{12}} (P_n) =\gamma (P_n)$ and star $K_{1,r}, r\geq 2$ satisfies $\gamma_{\stackrel{}{1k}}(K_{1,r})=\gamma (K_{1,r}), 1\le k\le r$.
Our main result in this Section provides an upper bound of quasi-perfect domination numbers of a tree in terms of the domination number.

If $S\subseteq V(T)$, we denote by $T[S]$ the subgraph of $T$ induced by the vertices of $S$.

\begin{lem}\label{lem.components}
Let $T$ be a tree and let $S$ be a dominating set. Then, every vertex not in $S$ has at most one neighbor at each connected component of the subgraph $T[S]$.
\end{lem}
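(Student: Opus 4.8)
The plan is to argue by contradiction using the acyclic structure of $T$. Suppose some vertex $v \notin S$ has two distinct neighbors $x$ and $y$ that lie in the same connected component $C$ of the induced subgraph $T[S]$. Since $C$ is connected, there is a path $P$ from $x$ to $y$ entirely inside $C$, hence entirely inside $T$ and using only vertices of $S$; in particular $v$ does not appear on $P$ because $v \notin S$.

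Next I would close up this path into a cycle. Append the edges $vx$ and $vy$ (both present since $x,y$ are neighbors of $v$) to the path $P$. Because $v$ is not an internal vertex of $P$ and $x \neq y$, the walk $v, x, \ldots, y, v$ visits $v$ only at its endpoints and is otherwise a simple path, so it is in fact a cycle in $T$. This contradicts the assumption that $T$ is acyclic, and therefore $v$ can have at most one neighbor in each component of $T[S]$.

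I do not anticipate a genuine obstacle here; the only point requiring a little care is the degenerate possibility that the path $P$ has length zero, i.e. $x = y$, but that is excluded because we assumed $x$ and $y$ are two distinct neighbors of $v$. Likewise one should note that $P$ together with the two edges to $v$ really does form a cycle of length at least $3$: the length of $P$ is at least $1$ (as $x\ne y$ and $xy$ could be an edge), plus the two edges $vx$, $vy$, giving a closed walk of length at least $3$ that is a cycle since no vertex other than $v$ is repeated. Hence the result follows immediately from the defining property of trees.

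Note that the hypothesis that $S$ is dominating is not actually used in the argument; what matters is purely that $T$ is a tree. The lemma is stated for dominating sets only because that is the setting in which it will be applied in the sequel, when bounding $\gamma_{\stackrel{}{1k}}(T)$ in terms of the number and sizes of the components of $T[S]$.
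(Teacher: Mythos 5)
Your argument is correct and is exactly the paper's proof, merely written out in full: two neighbors of a vertex $v\notin S$ in the same component of $T[S]$ would be joined by a path inside $S$, which together with the edges to $v$ closes a cycle, contradicting acyclicity. Your remark that the dominating hypothesis is not actually needed is also accurate; the paper states it only for the context in which the lemma is applied.
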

\begin{proof}
If a vertex not in $S$ has two neighbors in a connected component of $T[S]$ then $T$ has a cycle, which is not possible.
\end{proof}

As a consequence, the following result is obtained:

\begin{cor}\label{cor.components}
Let $T$ be a tree and $S$ a dominating set such that the subgraph $T[S]$ has at most $k$ connected components, then $S$ is a $\gamma_{\stackrel{}{1k}}$-set.
\end{cor}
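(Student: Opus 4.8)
The plan is to obtain the corollary as an immediate consequence of Lemma~\ref{lem.components}. Fix an arbitrary vertex $v\in V(T)\setminus S$; I need to control how many neighbors of $v$ lie in $S$. The key step is to distribute the neighbors of $v$ that belong to $S$ among the connected components of the induced subgraph $T[S]$: since the vertex sets of these components partition $S$, each such neighbor is assigned to exactly one component. By Lemma~\ref{lem.components}, no component receives more than one neighbor of $v$, so $|N(v)\cap S|$ does not exceed the number of connected components of $T[S]$, which by hypothesis is at most $k$.

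Next I would use that $S$ is a dominating set, which gives $|N(v)\cap S|\ge 1$ for every $v\notin S$. Putting the two bounds together, every vertex outside $S$ is adjacent to at least one and at most $k$ vertices of $S$; this is precisely the defining property of a $k$-quasiperfect dominating set. Since $v$ was arbitrary, $S$ is a $\gamma_{\stackrel{}{1k}}$-set, which is what we wanted to prove.

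There is essentially no obstacle here: the argument is a one-line counting step once Lemma~\ref{lem.components} is available, and the acyclicity of $T$ that powers that lemma is doing all the substantive work. The only point worth stating carefully is that the assignment of each vertex of $N(v)\cap S$ to a component of $T[S]$ is well defined, which holds simply because connected components partition the vertex set of $T[S]$.
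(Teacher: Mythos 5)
Your argument is correct and is exactly the route the paper intends: the corollary is stated there as an immediate consequence of Lemma~\ref{lem.components}, with the counting of at most one neighbor per component plus the domination lower bound left implicit. Your write-up simply makes that one-line deduction explicit, so there is nothing to add.
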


\begin{thm}\label{thm.upperbound}
For every tree $T$ and for every integer $k\geq 1$,
\begin{equation}\label{eq:generalbound}
 \gamma_{\stackrel{}{1k}}(T)\le  \gamma (T) +\bigg\lceil \frac{\gamma (T)}{k}\bigg\rceil-1
 \end{equation}
and this bound is tight.
\end{thm}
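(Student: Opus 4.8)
The plan is to start from an arbitrary $\gamma$-code $S$ of $T$ and enlarge it by adding a carefully chosen set of vertices so that the induced subgraph on the enlarged set has at most $k$ connected components; then Corollary~\ref{cor.components} immediately gives a $\gamma_{\stackrel{}{1k}}$-set, and the bound follows by counting how many vertices we added. Write $\gamma(T)=\gamma$ and let $c$ be the number of connected components of $T[S]$. Since $\gamma_{\stackrel{}{1k}}(T)\le \gamma(T)+\lceil\gamma(T)/k\rceil-1$ is what we want, it suffices to show we can connect the $c$ components down to at most $k$ components by adding at most $\lceil\gamma/k\rceil-1$ vertices; in fact I expect the sharper intermediate claim that $c\le\gamma$, so that $c$ components can be merged into $k$ by adding at most $c-k\le\gamma-k$ vertices, but one must be more economical than "one vertex per merge'' in general — actually, since $T$ is a tree, any two components of $T[S]$ at distance $2$ can be merged by adding a single vertex, and after repeatedly doing this the real cost is governed by the structure of the "component tree.'' The cleanest route is: contract each component of $T[S]$ to a point and look at the minimal subtree $T'$ of $T$ spanning all these contracted points; adding all interior (non-contracted) vertices of $T'$ yields a set whose induced subgraph is connected, and the number of such vertices is at most $c-1$. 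So without any extra work $\gamma_{\stackrel{}{1k}}(T)\le\gamma+c-1\le 2\gamma-1$, but to get the $\lceil\gamma/k\rceil$ term we stop merging early.

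The key quantitative step is to bound $c$, the number of components of $T[S]$, in terms of $\gamma$, and then to merge greedily. First I would observe that each component of $T[S]$ has at least one vertex, so $c\le|S|=\gamma$; moreover, because $S$ is a minimal (minimum) dominating set, one can often do better, but $c\le\gamma$ suffices. Next, order the components and build the spanning tree $T'$ of contracted points as above. We want to add "connector'' vertices (the interior vertices of $T'$) to reduce the component count from $c$ to at most $k$. Adding one connector vertex on a path in $T'$ between two component-points reduces the number of components by at least one; more carefully, if we add a set $A$ of connector vertices, the number of components of $T[S\cup A]$ is $c$ minus (the number of components-of-$T'$ that get linked). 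The efficient strategy is to pick connector vertices that each have high degree in the relevant contracted tree, but the worst case is a path-like $T'$, where each added vertex merges exactly two groups; then to go from $c$ components to $k$ components we need $c-k$ connectors — giving only $\gamma+\gamma-k-\dots$, which is too weak. So the honest approach must instead group the $c$ components into $k$ blocks of sizes $c_1,\dots,c_k$ with $\sum c_i=c\le\gamma$, connect block $i$ internally using at most $c_i-1$ connector vertices, and then choose the block sizes to minimize $\sum(c_i-1)=c-k$. Hmm — this again gives $c-k$. The resolution: we do not need all $c$ original components; rather, the bound $\lceil\gamma/k\rceil-1$ strongly suggests that the relevant count is not $c$ but something like "$\gamma$ items to be packed $k$ per box.'' I therefore expect the actual proof to proceed differently: take a $\gamma$-code $S$ with $T[S]$ having the fewest components, argue $T[S]$ is then a forest with some structural property, and partition $S$ itself (not its components) into at most $\lceil\gamma/k\rceil$ groups each inducing a connected subgraph after adding connectors, where each group of $k$ vertices can be made connected at the cost of at most... — and the total connector cost telescopes to $\lceil\gamma/k\rceil-1$.

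Concretely, here is the approach I would commit to. Fix a $\gamma$-code $S$. I claim one can list the components $C_1,\dots,C_c$ of $T[S]$ together with, for $i\ge 2$, a single vertex $x_i\notin S$ adjacent to $C_i$ and lying on the path to $\bigcup_{j<i}C_j$ — i.e. the $x_i$ are the connectors described above, $c-1$ of them. Now process the components in $k$-sized batches: $\{C_1,\dots,C_k\}$, $\{C_{k+1},\dots,C_{2k}\}$, and so on, $\lceil c/k\rceil$ batches total. Within a batch add all the $x_i$'s with $i$ in that batch except we are still over budget. The genuinely correct bookkeeping (and the step I expect to be the main obstacle) is to realize that we should connect one component of each batch to a fixed "anchor'' component, spending $\lceil c/k\rceil - 1 \le \lceil\gamma/k\rceil-1$ connectors, while inside a batch the $k$ components need no connectors because a set with $\le k$ components is already a $\gamma_{\stackrel{}{1k}}$-set by Corollary~\ref{cor.components}. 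That is the crux: we never need to make $T[S\cup A]$ connected — only to reduce to $\le k$ components. So: partition $C_1,\dots,C_c$ into $\lceil c/k\rceil$ groups of size $\le k$; pick one representative component per group; add $\lceil c/k\rceil-1$ connector vertices linking the representatives into a single component. The resulting set $S'$ has at most $k + (\text{leftover within the merged group})$... one checks the merged group now is one component and the other $\lceil c/k\rceil-1$ groups are untouched with $\le k$ components each — wait, that is not $\le k$ components total. Hence the final fix: choose the $\lceil c/k\rceil-1$ connectors so that each connector merges a whole group into the anchor, which requires the connector to be adjacent to both — achievable since $T$ is a tree and we may route through interior vertices, at possibly more than one vertex per group. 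This is exactly where care is needed, and I would handle it by an induction on $c$: if $c\le k$ we are done by Corollary~\ref{cor.components}; otherwise find $k$ components whose union, together with at most $\lceil\gamma/k\rceil-1-(\text{already spent})$ suitable connectors forming a subtree, can be collapsed, delete them, and recurse — the delicate point being that the path in $T$ joining $k$ scattered components may pass through up to $k-1$ interior vertices, so one must argue the total over all batches is still $\le\lceil\gamma/k\rceil-1$, which works out because $\sum_{\text{batches}}(\text{size}_i-1) = c - \lceil c/k\rceil \le \gamma-1-(\lceil\gamma/k\rceil-1)$... and combined with the $c-1$-style global bound this collapses correctly. I would then exhibit a tree — a "spider'' or a subdivided star, e.g. take $k\gamma'$ paths of length $2$ glued appropriately — attaining equality, to prove tightness.
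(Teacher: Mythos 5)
Your opening plan (enlarge a $\gamma$-code until the induced subgraph has at most $k$ components, then invoke Corollary~\ref{cor.components}) is the right frame, but the argument never finds the one idea that makes the budget $\lceil \gamma(T)/k\rceil-1$ work, and you candidly document the failure: every merging scheme you try (Steiner connectors, batching components into groups of $k$, linking representatives to an anchor) pays roughly one added vertex per merged component, i.e.\ about $c-k$ additions, which only yields $\gamma_{\stackrel{}{1k}}(T)\le 2\gamma(T)-k$ and not the claimed bound. Your proposed rescue --- ``choose the connectors so that each connector merges a whole group into the anchor'' --- is not available: for an arbitrary grouping of components of $T[S]$ there is no reason a single vertex adjacent to all of them (plus the anchor) exists, and the closing ``induction on $c$ \dots which works out because \dots this collapses correctly'' is an unverified bookkeeping claim, not a proof. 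The missing idea, which is the entire content of the paper's proof, is to let the \emph{obstruction itself} be the connector: if the current dominating set $S$ is not a $\gamma_{\stackrel{}{1k}}$-set, then by definition there is a vertex $x\notin S$ with at least $k+1$ neighbors in $S$; by Lemma~\ref{lem.components} these neighbors lie in pairwise distinct components of $T[S]$, so adding this single vertex $x$ merges at least $k+1$ components into one and drops the component count by at least $k$. Iterating, after $j$ additions the set has at most $r-jk$ components ($r\le\gamma(T)$ being the initial count), and once the count is at most $k$ Corollary~\ref{cor.components} forces the set to be a $\gamma_{\stackrel{}{1k}}$-set; hence at most $\lceil (r-k)/k\rceil\le\lceil\gamma(T)/k\rceil-1$ vertices are ever added. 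Your hub vertices are handed to you for free precisely because the set is not yet quasiperfect --- you were trying to manufacture hubs where none need exist.

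A second gap: tightness. Saying you ``would exhibit a spider or subdivided star attaining equality'' is not a construction; the paper treats the trivial case $k\ge\gamma(T)$ (where both sides equal $\gamma(T)$) and, for $2\le a=\gamma(T)$ and $k<a$, exhibits an explicit family of trees with $a=qk+r$ for which a $\gamma$-code of size $a$ and a $\gamma_{\stackrel{}{1k}}$-code of size $a+q$ are identified, verifying $a+q=a+\lceil a/k\rceil-1$. To complete your write-up you must both replace the merging scheme by the ``add a vertex with $\ge k+1$ neighbors'' iteration and supply (and verify) a concrete extremal family.
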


\begin{proof}
Let $S$ be a $\gamma$-code of $T$. If $S$ is a $\gamma_{\stackrel{}{1k}}$-set, then inequality~(\ref{eq:generalbound}) trivially holds.

Suppose on the contrary that $S$ is not a $\gamma_{\stackrel{}{1k}}$-set. We intend to construct a $\gamma_{\stackrel{}{1k}}$-set $S^*$ containing $S$  and satisfying the desired inequality. Let $r$ be the number of connected components of the subgraph $T[S]$. Then, $\gamma(T)\ge r$ and, by Corollary~\ref{cor.components}, $r>k$.

Consider a vertex $x_0\in V(T)\setminus S$ with at least $k+1$ neighbors in $S$ and let $S_1=S\cup \{ x_0 \}$. By Lemma~\ref{lem.components}, all the neighbors of $x_0$ in $S$ lie in
different connected components of $T[S]$, therefore $S_1$ is a dominating set inducing a subgraph $T[S_1]$ with at most $r-k$ connected components. If $S_1$ is a $\gamma_{\stackrel{}{1k}}$-set, let $S^*=S_1$. Otherwise, consider a vertex $x_{1}\in V(T)\setminus S_1$ having at least $k+1$ neighbors in $S_1$ and let $S_2=S_1\cup \{ x_1\}$. Again all the neighbors of $x_1$ in $S_1$ lie in different connected components of $T[S_1]$, therefore $S_2$ is a dominating set inducing a subgraph $T[S_2]$ with at most $(r-k)-k=r-2k$ connected components. If $S_2$ is a $\gamma_{\stackrel{}{1k}}$-set, let $S^*=S_2$.

Observe that this proceeding will end since $T[S_i]$ has at most $r-ik$ connected components, and this number sequence is strictly decreasing.
In the worst case, you should consider $j=\lceil \frac {r-k}{k} \rceil$ with  $S_j$ having at most $r-jk$ connected components because in this case $r-jk\leq k$ and $S_j$ must be a $\gamma_{\stackrel{}{1k}}$-set. So $|S^*|\leq |S|+j=\gamma(T)+\lceil \frac {r-k}{k} \rceil$. Hence

$$\gamma_{\stackrel{}{1k}}(T)\le |S^*|\leq |S|+j= \gamma(T)+ \bigg\lceil \frac {r-k}{k} \bigg\rceil
\le \gamma(T)+ \bigg\lceil \frac {\gamma(T)-k}{k} \bigg\rceil=\gamma(T)+ \bigg\lceil \frac {\gamma(T)}{k} \bigg\rceil-1 .$$

We finally show the tightness of the bound. Notice that if $k\geq \gamma (T)$ then $\gamma_{\stackrel{}{1k}}(T)=\gamma(T)$ and $\bigg\lceil \displaystyle\frac {\gamma(T)}{k} \bigg\rceil=1$, so in this case $\gamma_{\stackrel{}{1k}}(T)=\gamma(T)+\bigg\lceil \displaystyle\frac {\gamma(T)}{k} \bigg\rceil-1$.

Next, suppose that $\gamma(T)=a,\ a\geq 2$ and $k<a$. Consider the graph in Figure~\ref{fig.realization.upper.bound} where $a=q\cdot k+r,\ q\geq 1, 1\leq r\leq k$. It is clear that the set of squared vertices is a $\gamma$-code so $\gamma(T)=a$ and the set of black vertices is a $\gamma_{\stackrel{}{1k}}$-code so

$\gamma(T)+ \bigg\lceil \displaystyle\frac {\gamma (T)}{k} \bigg\rceil-1 =a+\bigg\lceil \frac{q\cdot k+r}{k}\bigg\rceil-1 =a+q+1-1=q\cdot k+r +q =\gamma_{\stackrel{}{1k}}(T)$.
\end{proof}

\begin{figure}[!hbt]
\begin{center}
\includegraphics[width=0.9\textwidth]{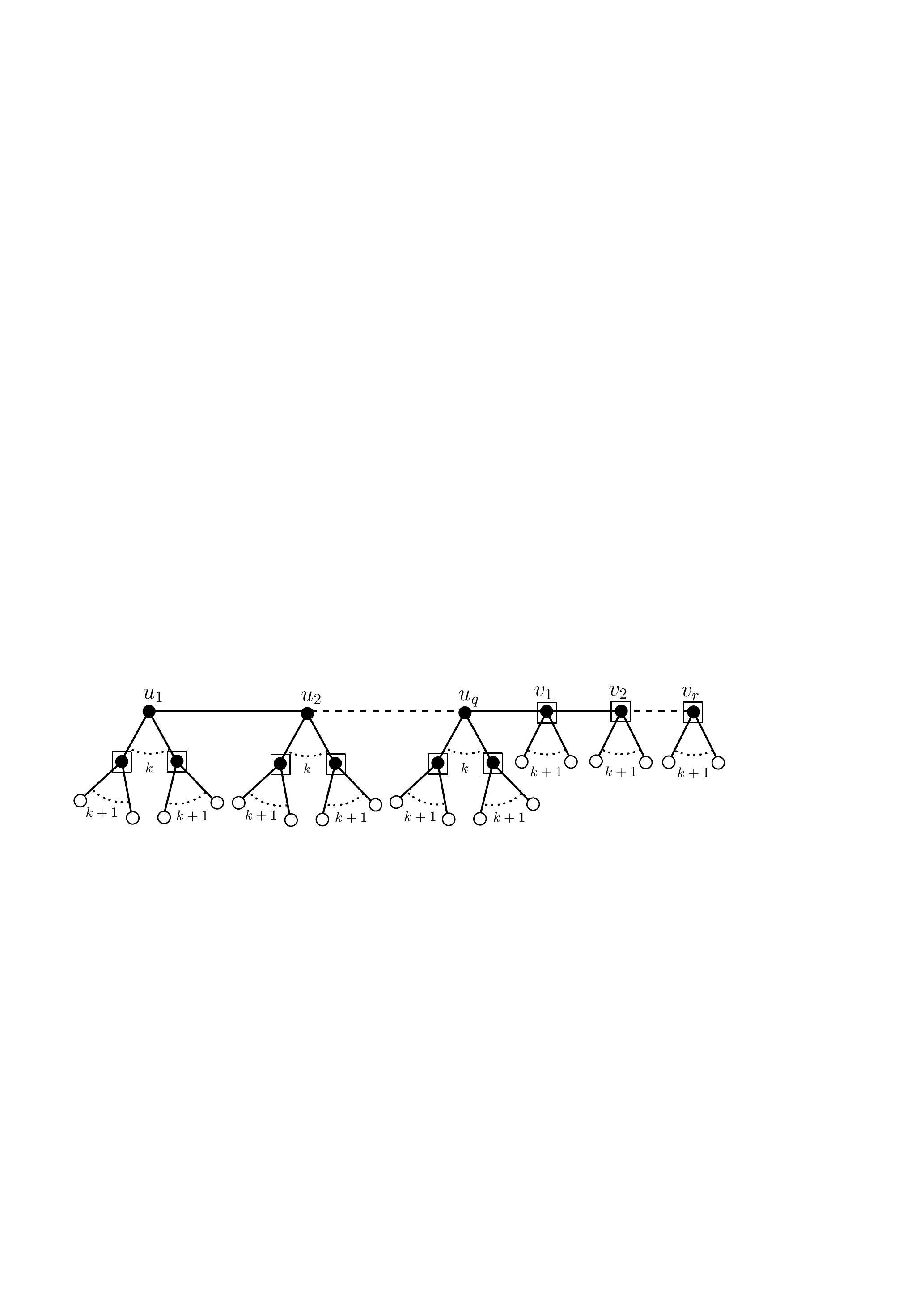}
\caption{Squared vertices are a $\gamma$-code and black vertices are a $\gamma_{\stackrel{}{1k}}$-code}\label{fig.realization.upper.bound}
\end{center}
\end{figure}

\vspace{1.5cc}
\noindent{\bf 3.2 Trees satisfying $\gamma_{\stackrel{}{11}}(T)=2\gamma(T)-1$ }
\vspace{1cc}

In the particular case of the perfect domination number the upper bound shown in Theorem~\ref{thm.upperbound} is the following

$$\displaystyle \gamma_{\stackrel{}{11}}(T)\le  \gamma (T) +\bigg\lceil \frac{\gamma (T)}{1}\bigg\rceil -1 = 2\gamma (T) -1$$

Notice that this bound is far from being true for general graphs and and, as a matter of fact, the difference between both parameters can be as large as desired. For instance, the graph
shown in Figure~\ref{fig.noboundgraph} satisfies $\gamma (G)=2$ and $\gamma_{\stackrel{}{11}}(G)=|V(G)|>2\gamma(G)-1$.

\begin{figure}[!hbt]
\begin{center}
\includegraphics[width=0.22\textwidth]{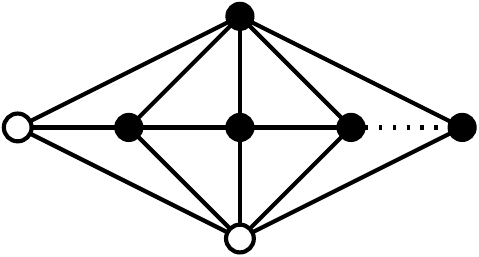}
\caption{The pair of white
vertices form a $\gamma$-code meanwhile $\gamma_{\stackrel{}{11}}(G)=|V(G)|$.}\label{fig.noboundgraph}
\end{center}
\end{figure}

Let $T$ be a tree satisfying $\gamma_{\stackrel{}{11}}(T)=2\gamma(T)-1$ then for any $\gamma$-code $S$ of $T$ the associated $\gamma_{\stackrel{}{11}}$-set $S^*$ constructed in Theorem~\ref{thm.upperbound} satisfies $|S^*|=2\gamma(T)-1$, so it is also a $\gamma_{\stackrel{}{11}}$-code. However, some trees contain $\gamma_{\stackrel{}{11}}$-codes which can no be obtained from this construction. For instance, the tree shown in Figure~\ref{fig.boundtree} has a $\gamma_{\stackrel{}{11}}$-code which does not contain any $\gamma$-code.

\begin{figure}[!hbt]
\begin{center}
\includegraphics[width=0.35\textwidth]{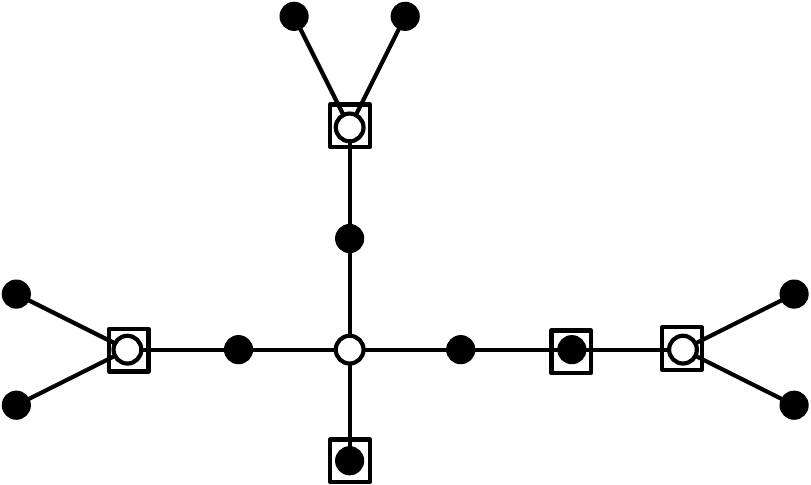}
\caption{Squared vertices form the unique $\gamma_{\stackrel{}{11}}$-code and they do not contain the unique $\gamma$-code consisting on white vertices.}\label{fig.boundtree}
\end{center}
\end{figure}

Our next goal is to characterize the family of trees achieving this bound. To this end we review the construction of the $\gamma_{\stackrel{}{11}}$-set associated to a $\gamma$-code given in Theorem~\ref{thm.upperbound}. Let $S$ be a $\gamma$-code of a tree $T$ which is not a $\gamma_{\stackrel{}{11}}$-code. Notice that since $S$ is not a $\gamma_{\stackrel{}{11}}$-set there exists at least one vertex $x\notin S$ that is not a leaf. Denote by $C_1,\dots,C_k$, $k\ge 1$, the connected components of the graph $T-(S\cup L')$, where $L'$ is the set of leaves of $T$ not in $S$. Then for some $r\in \{1,\dots,k\}$, at least one vertex in each $C_1,\dots ,C_r$ has two or more neighbors in $S$ and vertices in $C_{r+1},\dots ,C_k$ (if $r<k$) have exactly one neighbor in $S$. In Proposition below we follow this notation and a precise description of the $\gamma_{\stackrel{}{11}}$-set $S^*$ associated to $S$ is provided.

\begin{prop}\label{prop.properties.S^*}
Let $S$ be a $\gamma$-code of a tree $T$ which is not a $\gamma_{\stackrel{}{11}}$-set. Then $S^*=S\cup (\bigcup_{i=1}^r V(C_i))$ has the following properties.
\begin{enumerate}
  \item $S^*$ is a $\gamma_{\stackrel{}{11}}$-set of $T$.
  \item $S^*$ has at most $2 \gamma(T)-1$ vertices.
  \item If $S'$ is a $\gamma_{\stackrel{}{11}}$-set of $T$ containing $S$ then $S^*\subseteq S'$.
\end{enumerate}
\end{prop}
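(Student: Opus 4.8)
The plan is to verify the three claims essentially in the order stated, since each builds on the previous one. For (1), I would argue that $S^*$ is still a dominating set (it contains $S$, which already dominates $T$), so the only thing to check is the perfect-domination condition: every vertex $v\notin S^*$ has exactly one neighbor in $S^*$. By construction, $V(T)\setminus S^* = L' \cup \bigcup_{i=r+1}^k V(C_i)$. If $v\in L'$ it is a leaf, hence has exactly one neighbor, which lies in $S\subseteq S^*$. If $v\in V(C_j)$ for some $j>r$, then by definition of the indexing $v$ has exactly one neighbor in $S$; I must also check $v$ has no neighbor inside any $C_i$ with $i\le r$ and no neighbor in $L'$. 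A neighbor in $L'$ is impossible since $v$ is not a leaf's unique neighbor only if... — more carefully, a vertex of $C_j$ adjacent to a leaf of $L'$ is fine, that neighbor is in $L'\subseteq V(T)\setminus S^*$, not in $S^*$, so it doesn't violate anything; and $v$ cannot be adjacent to a vertex of a different component $C_i$ of $T-(S\cup L')$ by definition of connected component. So $v$ has exactly one neighbor in $S^*$, giving (1).

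For (2), I would invoke the bound machinery from Theorem~\ref{thm.upperbound} with $k=1$: since $S^*\supseteq S$ is a $\gamma_{\stackrel{}{11}}$-set obtained by the same component-merging process (each added vertex had $\ge 2$ neighbors in distinct components, so adding all of $C_1,\dots,C_r$ amounts to performing the reduction until a single component per $C_i$ survives), the counting argument there yields $|S^*|\le \gamma(T)+\lceil\gamma(T)/1\rceil-1 = 2\gamma(T)-1$. Alternatively, and perhaps more cleanly, I would count directly: $T[S]$ has $r'\ge r$ components, $\gamma(T)=|S|\ge r'$, and each $C_i$ with $i\le r$ reduces the component count, so $|S^*|=|S|+\sum_{i=1}^r|V(C_i)|$, and a short induction shows $\sum_{i=1}^r|V(C_i)| \le (\text{number of components of }T[S]) - (\text{number of components of }T[S^*])\le \gamma(T)-1$; the decisive point is that $T[S^*]$ is connected — indeed it must be, since otherwise some vertex outside $S^*$ would need $\ge 2$ neighbors in $S^*$ to link two components, contradicting (1). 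With $T[S^*]$ connected, $|S^*|$ equals $|S|$ plus (the number of edges added) and tree-counting gives $|S^*| - 1 \le (|S|-1) + (\text{components of }T[S] - 1) \le 2(\gamma(T)-1)$, i.e. $|S^*|\le 2\gamma(T)-1$.

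For (3), the key observation is a minimality/forcing argument. Suppose $S'$ is any $\gamma_{\stackrel{}{11}}$-set with $S\subseteq S'$; I want each $C_i$ with $i\le r$ to lie entirely in $S'$. Fix such a $C_i$ and a vertex $w\in C_i$ with two neighbors $a,b\in S$. Then $a,b\in S'$, so $w$ has at least two neighbors in $S'$, forcing $w\in S'$ (else the perfect condition fails). Now propagate: any vertex $w'\in C_i$ adjacent to $w$ has neighbor $w\in S'$; if $w'\notin S'$ it must have exactly that one neighbor in $S'$, but then since $C_i$ is connected I can walk outward and show all of $C_i$ must be in $S'$ — the induction is: if some $w'\in C_i$ is adjacent to a vertex of $S'\cap C_i$ and $w'\notin S'$, then $w'$ has exactly one $S'$-neighbor; tracking how the perfect condition interacts with the two original $S$-neighbors $a,b$ of the "anchor" $w$ eventually forces a contradiction unless all of $V(C_i)\subseteq S'$. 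I would set this up carefully by rooting $C_i$ at $w$ and inducting on distance from $w$.

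The main obstacle I anticipate is item (3), specifically the propagation that forces an entire component $C_i$ into $S'$ from the single seed $w$. Claims (1) and (2) are bookkeeping with the tree structure and the already-proven Theorem~\ref{thm.upperbound}, but (3) requires a genuinely local argument about how adjacency to two far-apart $S$-vertices cascades; I expect the cleanest route is to show by induction on the BFS-layers of $C_i$ (rooted at a vertex with two $S$-neighbors) that each layer is contained in $S'$, using at each step that a vertex of $C_i$ with a neighbor already in $S'$ and not itself in $S'$ would, together with the structure of $C_i$ as a subtree, either create a second $S'$-neighbor or leave some descendant undominated-perfectly.
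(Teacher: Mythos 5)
Your item (1) is correct and is essentially the paper's argument. The genuine gap is in item (3), exactly at the propagation step you yourself flag as the main obstacle: it is left open, and the missing idea is much simpler than the BFS-layer machinery you sketch. Every vertex of $C_i$ lies outside $S$, so, since $S$ dominates $T$, it already has a neighbor in $S\subseteq S'$. Hence if some $w'\in V(C_i)\setminus S'$ had a neighbor in $S'\cap V(C_i)\subseteq S'\setminus S$ (its parent in your rooting at the anchor, or its predecessor on a path from the anchor), then $w'$ would have at least two neighbors in $S'$, contradicting that $S'$ is a $\gamma_{\stackrel{}{11}}$-set; so such a $w'$ is forced into $S'$, and walking out from the anchor sweeps all of $V(C_i)$ into $S'$. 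Your sentence ``if $w'\notin S'$ it must have exactly that one neighbor in $S'$'' overlooks the $S$-dominator of $w'$; no tracking of the anchor's two neighbors $a,b$ or of descendants is needed. This is precisely the paper's proof: on a $u_i$--$v$ path in $C_i$ take the first vertex not in $S'$; it has one neighbor in $S\subseteq S'$ and one in $S'\setminus S$.

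For item (2), your first route (rerunning the component-merging count of Theorem~\ref{thm.upperbound} with $k=1$) does work, but note it needs the very observation missing above: to see that this particular $S^*$ arises from that process, order each $C_i$ by BFS from its anchor; the anchor has two neighbors in $S$, and every later vertex has its own $S$-neighbor plus its already-added parent, so each addition merges at least two components of the current induced subgraph, giving $\sum_{i\le r}|V(C_i)|$ at most the number of components of $T[S]$ minus one, hence at most $\gamma(T)-1$. This is a genuinely different and shorter argument than the paper's, which instead counts edges of $T-L'$ and derives the identity $2|S|-1=|S^*|+\sum_{i=1}^r\delta_i+\sum_{i=r+1}^k|V(C_i)|-k+|E(S:S)|$; that sharper bookkeeping is what gets reused in Lemma~\ref{lem.estructure.S^*} and in the characterization of trees with $\gamma_{\stackrel{}{11}}(T)=2\gamma(T)-1$, so the paper's route buys more than the bare inequality. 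Your ``cleaner'' alternative, however, rests on a false claim: $T[S^*]$ need not be connected. Take the path $s_1,w,s_2,s_3,x,y,s_4$ and attach two leaves to each $s_j$; then $S=\{s_1,s_2,s_3,s_4\}$ is a $\gamma$-code, $S^*=S\cup\{w\}$, and $s_4$ is an isolated vertex of $T[S^*]$, yet no vertex outside $S^*$ has two neighbors in $S^*$, because the two components of $T[S^*]$ are joined in $T$ through the component $\{x,y\}$, each of whose vertices has a single $S^*$-neighbor. Fortunately connectivity is irrelevant: the trivial fact that $T[S^*]$ has at least one component already yields the inequality you display, so only the first of your two formulations of (2) should be kept.
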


\begin{proof}
\begin{enumerate}
\item  Let $u\in V(T)\setminus S^*$. It $u$ is a leaf then it has just one neighbor in $S^*$. Suppose now that $u\notin S^*\cup L'$, then there exists $i\in \{r+1,\dots k\}$ such that $u\in V(C_i)$ and it has just one neighbor in $S$. Using that the connected components are pairwise disjoint, it is clear that $u$ has exactly one neighbor in $S^*$ as desired.

\item Consider the tree $T-L'$. By construction,
$V(T-L')=S\cup V(C_1)\cup \dots \cup V(C_k)$, where $S,V(C_1), \dots ,V(C_k)$ are pairwise disjoint sets. Therefore,
\begin{equation}\label{eq.leaves}
|E(T-L')|=|V(T-L')|-1=|S|+\sum_{i=1}^{k}|V(C_i)|-1.
\end{equation}

Now observe that the edges of $T-L'$ connect two vertices of one of the connected components $C_i$, or two vertices of the $\gamma$-code $S$, or a vertex of $S$ with a vertex of some $C_i$.
For any pair of subsets of vertices $A$, $B$, let us denote $E(A:B)$  the set of edges with an endpoint in $A$ and the other one in $B$.
With this notation, we have that:
$$E(T-L')= \Big(\bigcup_{i=1}^k E(C_i:S)\Big)\cup \Big(\bigcup_{i=1}^k E(C_i:C_i)\Big)\cup E(S:S).$$

Moreover, the $2k+1$ subsets involved in this union are pairwise disjoint.

By hypotheses $|E(C_i:S)|=|V(C_i)|+\delta_i$, where $\delta_i\ge 1$ for all $i\in \{1,\dots ,r\}$, and $|E(C_i:S)|=|V(C_i)|$ for all $i\in \{r+1,\dots ,k\}$.
On the other hand, $|E(C_i:C_i)|=|V(C_i)|-1$ for all $i\in \{ 1,\dots ,k\}$, using that each $C_i$ is a tree.
From these observations we obtain
\begin{equation}\label{eq.sum}
\begin{split}
|E(T-L')|&=\sum_{i=1}^{k}|E(C_i:S)|+\sum_{i=1}^{k}|E(C_i:C_i)|+|E(S:S)|\\
&=\sum_{i=1}^{k}(|V(C_i)|-1)+\sum_{i=1}^{r}|E(C_i:S)|+\sum_{i=r+1}^{k}|E(C_i:S)|+|E(S:S)|\\
&= \sum_{i=1}^{k}|V(C_i)|-k+\sum_{i=1}^{r}(|V(C_i)|+\delta_i)+\sum_{i=r+1}^{k}|V(C_i)|+|E(S:S)|\\
&= \sum_{i=1}^{k}|V(C_i)|-k+\sum_{i=1}^{r}|V(C_i)|+\sum_{i=1}^{r}\delta_i+\sum_{i=r+1}^{k}|V(C_i)|+|E(S:S)|\\
&= \sum_{i=1}^{k}|V(C_i)|-k+|S^*|-|S|+\sum_{i=1}^{r}\delta_i+\sum_{i=r+1}^{k}|V(C_i)|+|E(S:S)|
\end{split}
\end{equation}

From Equations~\ref{eq.leaves} and~\ref{eq.sum} and using that $|V(C_i)|\ge 2$ for all $i\in \{r+1,\dots ,k\}$, because as otherwise the unique vertex in $C_i$ would be a leaf, we obtain
\begin{equation*}\label{eq.final}
\begin{split}
2|S|-1=
&|S^*|+\sum_{i=1}^{r}\delta_i+\sum_{i=r+1}^{k}|V(C_i)|-k+|E(S:S)|\\
&\ge |S^*|+r+2(k-r)-k\\
&=|S^*|+k-r\\
&\ge |S^*|.\end{split}
\end{equation*}

\item Let $S'$ be a $\gamma_{\stackrel{}{11}}$-set of $T$ containing $S$ and suppose on the contrary that $V(C_i)\setminus S'\neq \emptyset$ for some $i\in \{1,\dots ,r \}$. Let $v\in V(C_i)\setminus S'$ and let $u_i\in V(C_i)$ be a vertex with at least two neighbors in $S$. It is clear that $u_i\in S'$. Consider a $u_i-v$ path $P$ in $C_i$ and let $w$ be the first vertex of the path not in $S'$. Then, $w$ has at least one neighbor in $S\subseteq S'$ and a neighbor in $S'\cap V(P)\subseteq S'\setminus S$, contradicting the fact that $S'$ is a $\gamma_{\stackrel{}{11}}$-set.
\end{enumerate}
\end{proof}

Now we present some properties involving $\gamma$-codes and its associated $\gamma_{\stackrel{}{11}}$-sets, when the upper bound is reached. For a vertex set $C$ we denote by $N(C)$ the set of all neighbors of the vertices of $C$. We also denote by $L$ the set of leaves of $T$.

\begin{lem}\label{lem.estructure.S^*}
Let $T$ be a tree such that $\gamma_{\stackrel{}{11}}(T)=2 \gamma(T)-1$. Let $S$ be a $\gamma$-code of $T$ and let $L'$ be the set of leaves not in $S$. Then

\begin{enumerate}

\item $S$ is an independent set and every connected component $C_i$ of $T-(S\cup L')$ satisfies $|N(V(C_i))\cap S|=|V(C_i)|+1$.

\item $S^*=V(T)\setminus L'$. Moreover, if $S$ does not contain leaves, then $S^*=V(T)\setminus L$.
\end{enumerate}
\end{lem}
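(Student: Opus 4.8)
The plan is to extract equality from the chain of inequalities established in Proposition~\ref{prop.properties.S^*}(2). Recall that in that proof we derived
\[
2|S|-1 = |S^*| + \sum_{i=1}^{r}\delta_i + \sum_{i=r+1}^{k}|V(C_i)| - k + |E(S:S)|
\ge |S^*| + r + 2(k-r) - k = |S^*| + k - r \ge |S^*|,
\]
using $\delta_i\ge 1$, $|V(C_i)|\ge 2$ for $i>r$, and $|E(S:S)|\ge 0$. Since $\gamma_{\stackrel{}{11}}(T)=2\gamma(T)-1=2|S|-1$ and $|S^*|\ge\gamma_{\stackrel{}{11}}(T)$ (as $S^*$ is a $\gamma_{\stackrel{}{11}}$-set), every inequality in this chain must be an equality. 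First I would record what each forced equality gives: $|E(S:S)|=0$ yields that $S$ is independent; $\delta_i=1$ for every $i\in\{1,\dots,r\}$ gives $|N(V(C_i))\cap S| = |E(C_i:S)| = |V(C_i)|+\delta_i = |V(C_i)|+1$ (here $|N(V(C_i))\cap S|=|E(C_i:S)|$ because, by Lemma~\ref{lem.components}, each vertex of $S$ has at most one neighbor in the tree-component $C_i$, so the edges from $C_i$ to $S$ are in bijection with $N(V(C_i))\cap S$); and $k-r=0$, i.e. $r=k$, which means there are \emph{no} components $C_i$ with all vertices having a unique neighbor in $S$. This proves part (1).

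For part (2), from $r=k$ we get $S^* = S\cup\bigcup_{i=1}^{r}V(C_i) = S\cup\bigcup_{i=1}^{k}V(C_i) = V(T-L') = V(T)\setminus L'$, using the decomposition $V(T-L')=S\cup V(C_1)\cup\dots\cup V(C_k)$ from the proof of Proposition~\ref{prop.properties.S^*}. For the ``moreover'' clause, suppose $S$ contains no leaves; I must show $L'=L$. Since $S$ has no leaves, every leaf of $T$ lies outside $S$, hence in $L'$; conversely $L'\subseteq L$ by definition. Therefore $L'=L$ and $S^* = V(T)\setminus L$.

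The only genuinely delicate point is the identity $|N(V(C_i))\cap S| = |E(C_i:S)|$ underlying the statement in part (1): one must invoke Lemma~\ref{lem.components} to see that no vertex of $S$ sends two edges into the connected subgraph $C_i$, so counting edges is the same as counting neighbors. Everything else is bookkeeping: tracing which of the $\ge$ signs in the displayed chain were used, and reading off the equality conditions. I would also note, for cleanliness, that $S^*\ne\emptyset$ and $S\ne\emptyset$ are automatic for $n\ge 3$, so dividing the discussion by the existence of a non-leaf vertex outside $S$ (which guarantees $k\ge 1$, as in the setup preceding Proposition~\ref{prop.properties.S^*}) causes no edge-case trouble.
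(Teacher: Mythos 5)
Your proof is correct and follows essentially the same route as the paper: force equality throughout the chain from Proposition~\ref{prop.properties.S^*}(2) using $|S^*|\ge\gamma_{\stackrel{}{11}}(T)=2|S|-1$, read off $|E(S:S)|=0$, $\delta_i=1$ and $r=k$, and then obtain part (2) directly from the construction of $S^*$. The only nitpick is your citation of Lemma~\ref{lem.components} for $|N(V(C_i))\cap S|=|E(C_i:S)|$: that lemma is stated with the roles reversed (a vertex outside $S$ and components of $T[S]$), so strictly you need the same one-line acyclicity argument with $S$-vertices and the components $C_i$, which is exactly how the paper justifies this step.
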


\begin{proof}
\begin{enumerate}
\item If $\gamma_{\stackrel{}{11}}(T)=2 \gamma(T)-1$ then $S^*$ is a $\gamma_{\stackrel{}{11}}$-code and $|S^*|=2|S|-1$. From Equation~\ref{eq.final}, we deduce that $|E(S:S)|=0$, $r=k$ and $\delta_i=1$ for every $i\in  \{1,\dots ,r\}$.
Therefore, $S$ is an independent set and for any $i\in \{ 1,\dots ,k\}$, $|E(C_i:S)|=|V(C_i)|+\delta_i= |V(C_i)|+1.$ Since two different vertices of the same connected component $C_i$ have no common neighbor in $S$, we obtain that $|N(V(C_i))\cap S|=|E(C_i:S)|=|V(C_i)|+1$.

\item It is a direct consequence of the construction of $S^*$ and the preceding item.

\end{enumerate}

\end{proof}

\begin{rem}
Condition 1 in the Lemma~\ref{lem.estructure.S^*} means that there exists exactly one vertex in each connected component $C_i$ with exactly two neighbors in $S$ and the rest of vertices of $C_i$ have an unique neighbor in $S$.
\end{rem}

We need also some properties of the set of support vertices on trees reaching the upper bound.

\begin{lem}\label{lem.support.vertices}
Let $T$ be a tree such that $\gamma_{\stackrel{}{11}}(T)=2\gamma (T) -1$, then
\begin{enumerate}
\item The set of support vertices of $G$ is a dominating set.

\item Every support vertex of $G$ is a strong support vertex. Moreover the set of strong support vertices is the unique $\gamma$-code of $T$.
\end{enumerate}
\end{lem}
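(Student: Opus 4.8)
The plan is to deduce both assertions from a single claim. Assume $\gamma(T)\ge 2$ (if $\gamma(T)=1$ then $T=K_{1,r}$ with $r\ge 2$ and everything is immediate). By the observation that every tree of order at least $3$ has a $\gamma$-code with no leaves, fix a leaf-free $\gamma$-code $S$; write $L$ for the set of leaves of $T$ and $R=V(T)\setminus(S\cup L)$. Since a leaf has its support vertex as its only neighbour and $S$ contains no leaf, every support vertex of $T$ lies in $S$; in particular no vertex of $R$ is a support vertex. Also, by the observation that every $\gamma$-code contains all strong support vertices, the set $B$ of strong support vertices satisfies $B\subseteq S$. Everything then follows from the claim
\[
(\star)\qquad\text{every vertex of }S\text{ is a strong support vertex of }T,
\]
since $(\star)$ gives $S\subseteq B\subseteq\{\text{support vertices}\}\subseteq S$, so these three sets coincide: the set of support vertices is $S$, a dominating set (part~1); every support vertex is strong (first part of part~2); and $S=B$ is contained in every $\gamma$-code, all of cardinality $\gamma(T)=|B|$, hence $B$ is the unique $\gamma$-code (second part of part~2).

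To prove $(\star)$ I would invoke Lemma~\ref{lem.estructure.S^*} and the Remark after it: $S$ is independent, $S^{*}=V(T)\setminus L$ is a $\gamma_{\stackrel{}{11}}$-code (so $|V(T)\setminus L|=2\gamma(T)-1$), and each connected component $C_i$ of $T-(S\cup L)$ has exactly one ``special'' vertex with two neighbours in $S$ and all others with exactly one, so $|N(V(C_i))\cap S|=|V(C_i)|+1$ and every vertex of $N(V(C_i))\cap S$ has a unique neighbour in $C_i$. Let $v\in S$; it is interior. If $v$ has no interior neighbour, all of its (at least two) neighbours are leaves and $v$ is a strong support; so suppose $v$ has interior neighbours $u_1,\dots,u_d$, which lie in pairwise distinct components (a common neighbour would close a cycle), and assume for contradiction that $v$ has at most one leaf neighbour. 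If every $u_j$ has a second neighbour in $S$, I would reach a contradiction by an exchange: when $v$ has no leaf, $(S\setminus\{v\})\cup\{u_1\}$ is a leaf-free $\gamma$-code that is not independent ($u_1$ is adjacent to its second $S$-neighbour), contradicting Lemma~\ref{lem.estructure.S^*}; when $v$ has a unique leaf $\ell$, $(S\setminus\{v\})\cup\{\ell\}$ is a $\gamma$-code containing $\ell$, and by Lemma~\ref{lem.estructure.S^*} and Proposition~\ref{prop.properties.S^*} its associated $\gamma_{\stackrel{}{11}}$-set has $|V(T)\setminus(L\setminus\{\ell\})|=2\gamma(T)$ vertices, whereas Proposition~\ref{prop.properties.S^*} bounds it by $2\gamma(T)-1$. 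Hence some $u_j$, say $u_1$, is non-special, $N(u_1)\cap S=\{v\}$, and its component $C$ has $|V(C)|\ge 2$ (else $|N(V(C))\cap S|=1\neq|V(C)|+1$), so $u_1$ has a neighbour inside $C$.

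The hard part is this last situation, where $v$ has a non-special interior neighbour $u_1$. If $v$ has no leaf, the idea is to swap $v$ for $u_1$: provided each other $u_j$ keeps a second $S$-neighbour (otherwise one first removes all non-special neighbours of $v$, controlling the perfect-domination constraint while doing so), $S'=(S\setminus\{v\})\cup\{u_1\}$ is a leaf-free $\gamma$-code in which $v\in V(T)\setminus(S'\cup L)$ now merges the components attached to $u_2,\dots,u_d$ into one component $C^{*}$ with $|N(V(C^{*}))\cap S'|\le 1+\sum_{j\ge2}|V(C_{i(j)})|=|V(C^{*})|<|V(C^{*})|+1$, contradicting Lemma~\ref{lem.estructure.S^*} applied to $S'$. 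If $v$ has a unique leaf $\ell$, the swap must retain $\ell$, so instead I would produce a perfect dominating set of $T$ with fewer than $|V(T)\setminus L|$ vertices: from $S^{*}=V(T)\setminus L$ delete $v$ together with a degree-two non-special vertex chosen as a leaf of its component (so it has a single surviving neighbour in the set), add $\ell$, and check the result is still a perfect dominating set; since $\gamma_{\stackrel{}{11}}(T)=|V(T)\setminus L|$, this is impossible. The main obstacle is exactly preserving perfect domination under these modifications --- a careless deletion leaves some vertex with two neighbours in the set --- which forces the vertices removed from $S^{*}$ to be chosen as leaves of their components so that each keeps a unique surviving neighbour.
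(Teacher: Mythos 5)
Your reduction of both statements to the single claim $(\star)$ is sound, and your Case A (every interior neighbour of $v$ special) is correct and even a bit slicker than the paper's explicit exchange: contradicting the independence of the modified code, or the size of the $S^*$ associated to a code containing a leaf, works. But the lemma is not yet proved, because Case B --- which you yourself call the hard part --- is where the whole content lies, and your sketch breaks down there in two places. First, when $v$ has two or more non-special interior neighbours, $S'=(S\setminus\{v\})\cup\{u_1\}$ is not even a dominating set: every other non-special $u_j$ loses its only dominator, so Lemma~\ref{lem.estructure.S^*} cannot be applied to $S'$, and the parenthetical ``first remove all non-special neighbours of $v$'' is not an argument (removed from what, and why does what remains stay a minimum dominating set?). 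Second, in the unique-leaf subcase the set you describe, $(S^*\setminus\{v,w\})\cup\{\ell\}$ with $w$ a single degree-two non-special vertex, is not a perfect dominating set: $v$ is now outside the set but is still adjacent to $\ell$ and to all of its undeleted interior neighbours $u_1,\dots,u_d$, so $v$ has at least two neighbours in the set whenever $d\ge 2$ (and already for $d=1$ unless $w=u_1$). Deleting one extra vertex can never repair this; whole branches must go.

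That is precisely the device in the paper's proof, and it is the idea missing from yours: starting from $S^*=V(T)\setminus L$, delete $v$, delete the entire component $D_j$ of $T-S$ for every special neighbour $u_j$ of $v$, and for every non-special neighbour $u_i$ (except the one branch kept, or the leaf added, so that $v$ retains exactly one neighbour in the set) delete $\widehat{D_i}=D_i-D_i^*$, where $D_i^*$ is the component of $D_i-u_i$ containing the unique special vertex of $D_i$. With this choice every deleted vertex keeps exactly one neighbour in the remaining set --- its unique $S$-neighbour other than $v$, or, for $u_i$, its unique neighbour in $D_i^*$ --- so one obtains a perfect dominating set strictly smaller than $S^*$, the desired contradiction; and the construction works uniformly when several neighbours of $v$ are non-special and in the leaf variant. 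Until your Case B is replaced by an argument of this kind, the proof has a genuine gap.
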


\begin{proof}
\begin{enumerate}
\item Let $S$ be a $\gamma$-code of $T$ containing all support vertices and suppose on the contrary that there exists $v\in S$ such that is not a support vertex. By hypothesis and using
Lemma~\ref{lem.estructure.S^*}, the $\gamma_{\stackrel{}{11}}$-set associated to $S$ is $S^*=V(T)\setminus L$, with $|S^*|=2\gamma(T)-1$ so it is also a $\gamma_{\stackrel{}{11}}$-code. We are going to construct a smaller $\gamma_{\stackrel{}{11}}$-set of $T$, leading a contradiction.

Denote by $N(v)=\{u_1,\dots,u_s \},\ s\geq 2$, the set of neighbors of $v$. Note that $S$ is independent, so $N(v)\cap S=\emptyset$. Denote by $D_i$ the connected component of $T-S$ containing $u_i,\ i\in \{1,\dots,s \}$. Firstly suppose that each $u_i$ has exactly two neighbors in $S$ (see Figure~\ref{fig.lema3caso1a}(a)). Observe that one of then is vertex $v$ and that $D_i$ contains no leaves of $T$. We define $R=(S^*\setminus \{v\}) \setminus \big(\bigcup_{i=2}^s D_i\big)$ (see Figure~\ref{fig.lema3caso1a}(b)), then $V(T)\setminus R=L\cup  \big(\bigcup_{i=2}^s D_i\big) \cup \{v\}$. Note that any leaf has one neighbor in $R$, also the unique neighbor of $v$ in $R$ is $u_1$ and any vertex in $D_i,\ i\in \{2, \dots, s\}$ is dominated by exactly one vertex in $R$. So $R$ is a $\gamma_{\stackrel{}{11}}$-set of $T$, with smaller cardinal than $S^*$, which is not possible.

Now suppose that vertices $u_1,\dots, u_t$ for some $t\in \{1,\dots,s\}$ have exactly one neighbor in $S$, that must be $v$, and vertices $u_{t+1},\dots ,u_s$ have two neighbors in $S$ (see Figure~\ref{fig.lema3caso1b}(a)). Using that $u_i$ is not a leaf and condition 1 in Lemma~\ref{lem.support.vertices}, we denote by $D_i^*, \ i\in \{1,\dots,t\}$, the connected component of $D_i-\{u_i\}$ containing the unique vertex of $D_i$ with two neighbors in $S$ and let $\widehat{D_i}=D_i-D_i^*$. Then $R=(S^*\setminus \{v\})\setminus \big((\bigcup_{i=2}^t \widehat{D_i})\ \bigcup\  (\bigcup_{j=t+1}^s D_j)\big)$ (see Figure~\ref{fig.lema3caso1b}(b)) is a $\gamma_{\stackrel{}{11}}$-set of $T$, with smaller cardinal than $S^*$, which is not possible.

\item Let $S$ be the $\gamma$-code of $T$ consisting on all support vertices and suppose on the contrary that there exists $v\in S$ which is not a strong support vertex. Again the associated $\gamma_{\stackrel{}{11}}$-set satisfies $S^*=V(T)\setminus L$, with $|S^*|=2\gamma(T)-1$.

Denote by $N(v)=\{u_1,\dots,u_s \},\ s\geq 2$, the set of neighbors of $v$, where $u_1$ is the unique neighbor that is a leave, and by $D_i$ the connected component of $T-S$ containing $u_i,\ i\in \{2,\dots,s \}$. We repeat the construction above, so firstly suppose that each $u_i, i\in\{2,\dots, s\}$ has exactly two neighbors in $S$. Then  $R=\big((S^*\cup \{u_1\})\setminus \{v\}\big) \setminus \bigcup_{i=2}^s D_i$ is a $\gamma_{\stackrel{}{11}}$-set of $T$, with smaller cardinal than $S^*$, which is not possible.

Finally suppose that vertices $u_2,\dots, u_t$ for some $t\in \{2,\dots,s\}$ have exactly one neighbor in $S$, that must be vertex $v$, and vertices $u_{t+1},\dots ,u_s$ has two neighbors in $S$. We use the same notation as above and then the set $R=\big((S^*\cup \{u_1\})\setminus \{v\}\big)\setminus \big((\bigcup_{i=2}^t \widehat{D_i})\ \bigcup\  (\bigcup_{j=t+1}^s D_j)\big)$ is a $\gamma_{\stackrel{}{11}}$-set of $T$, with smaller cardinal than $S^*$, which is not possible.

\end{enumerate}
\end{proof}

\begin{figure}[!ht]
\begin{center}
\includegraphics[width=0.80\textwidth]{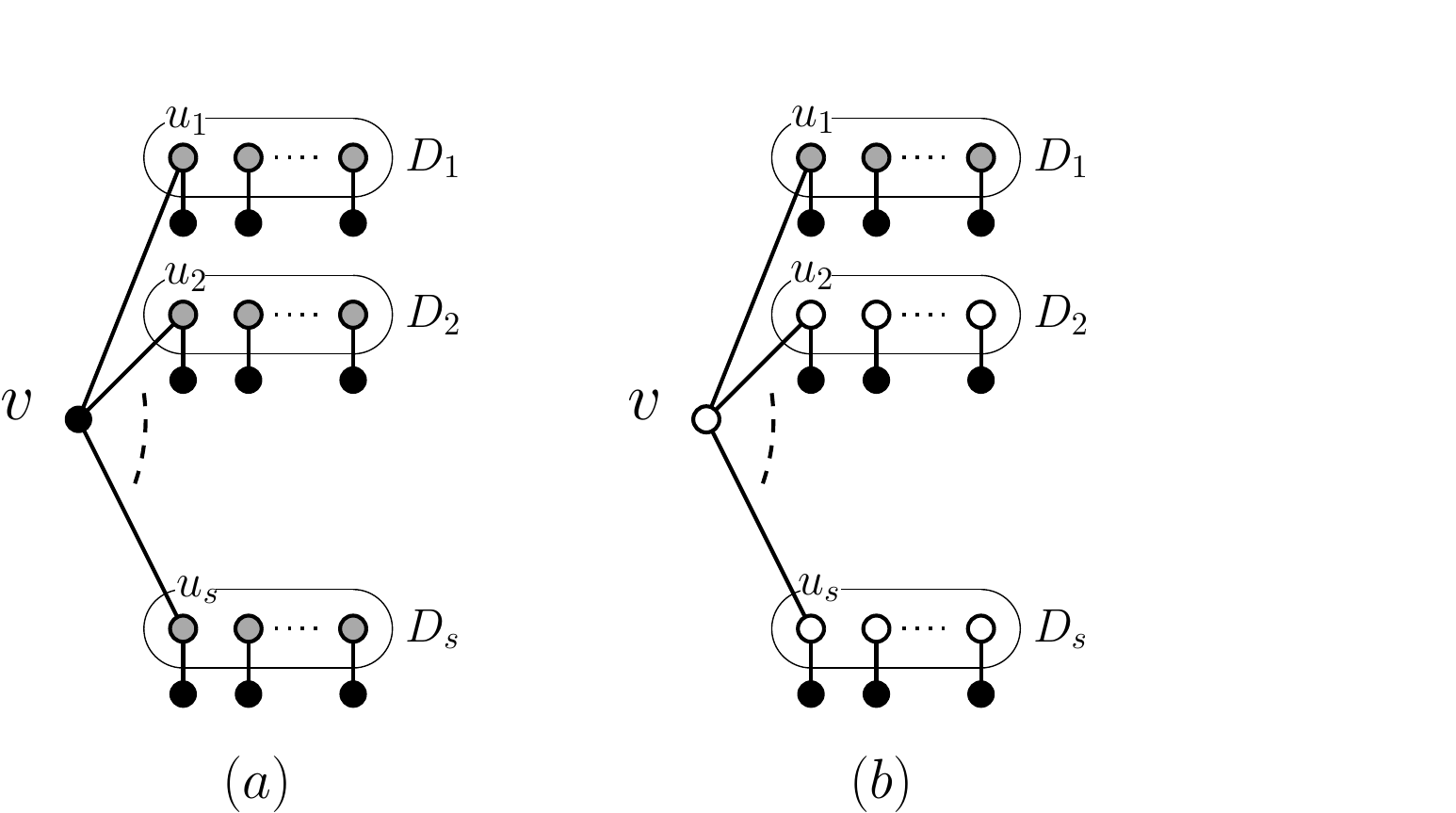}
\caption{ (a) Black vertices are in $S$. Black and gray vertices are in $S^*$.
(b) There is a $\gamma_{11}$-set not containing white vertices.}\label{fig.lema3caso1a}
\end{center}
\end{figure}
\begin{figure}[!ht]
\begin{center}
\includegraphics[width=0.70\textwidth]{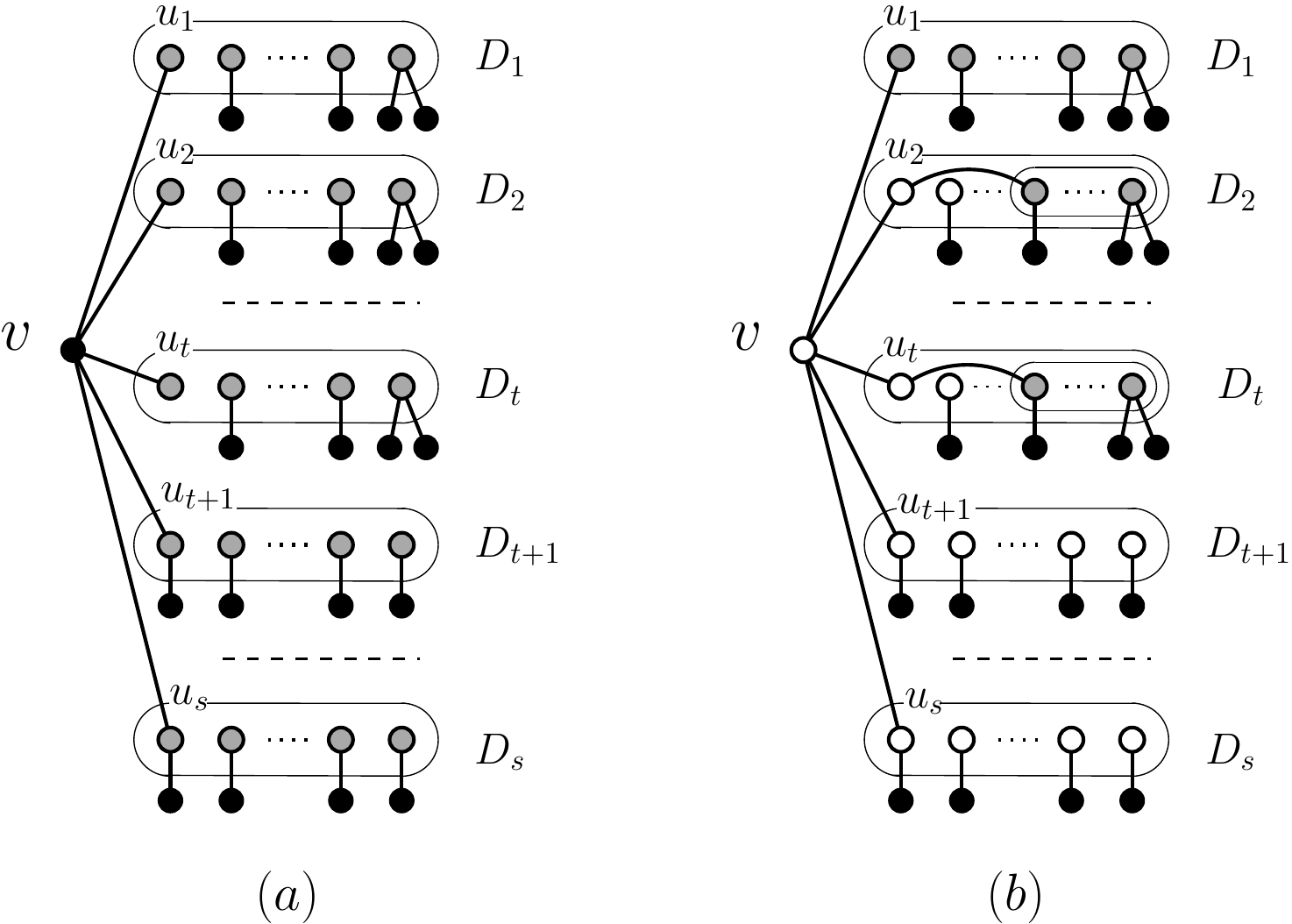}
\caption{(a) Black vertices are in $S$. Black and gray vertices are in $S^*$.
(b) There is a $\gamma_{11}$-set not containing white vertices.}\label{fig.lema3caso1b}
\end{center}
\end{figure}

Now we can characterize trees achieving the upper bound stated in Theorem~\ref{thm.upperbound}, in the case of perfect domination.

\begin{thm}

Let $T$ be a tree. Then $\gamma_{\stackrel{}{11}}(T)=2\gamma (T) -1$ if and only if the following conditions hold:
\begin{enumerate}
\item the set $S$ of strong support vertices of $T$ is an independent dominating set,
\item any connected component C of $T-(S\cup L)$ satisfies $|N(V(C))\cap S|=|V(C)|+1$, that is, every vertex in $C$ has exactly one neighbor in $S$ except one vertex that has two neighbors in $S$.
\end{enumerate}

\end{thm}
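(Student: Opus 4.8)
The plan is to prove the two implications separately, in each case working with the $\gamma_{\stackrel{}{11}}$-set $S^{*}$ associated with a $\gamma$-code, which was analysed in Proposition~\ref{prop.properties.S^*} and Lemmas~\ref{lem.estructure.S^*} and~\ref{lem.support.vertices}. Throughout, $T$ is assumed to have order at least $3$, in line with the other structural results of the paper.

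\textbf{Necessity.} Suppose $\gamma_{\stackrel{}{11}}(T)=2\gamma(T)-1$. First I would apply Lemma~\ref{lem.support.vertices}: the set of support vertices is dominating, every support vertex is strong, and the set $S$ of strong support vertices is the unique $\gamma$-code, so in particular $S$ is a dominating set. Since a support vertex of a tree of order at least $3$ is interior, $S$ contains no leaf, so $L'=L$. Applying Lemma~\ref{lem.estructure.S^*} to this $S$ now gives condition~1 (it asserts that $S$ is independent) together with the equality $|N(V(C))\cap S|=|V(C)|+1$ for every component $C$ of $T-(S\cup L)$. To obtain the reformulation inside condition~2 I would note that no vertex of $S$ is adjacent to two vertices of a single component $C$ (otherwise $T$ has a cycle), so $|N(V(C))\cap S|=\sum_{v\in V(C)}|N(v)\cap S|$, with each summand at least $1$ because $S$ dominates; hence the displayed equality is equivalent to saying that exactly one vertex of $C$ has two neighbours in $S$ and every other vertex of $C$ has exactly one.

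\textbf{Sufficiency.} Now assume conditions~1 and~2, and let $S$ be the set of strong support vertices. I would first show that $S$ is the unique $\gamma$-code: by condition~1 it is dominating, so $\gamma(T)\le|S|$, while every $\gamma$-code of a tree contains all strong support vertices, so $|S|\le\gamma(T)$; hence $\gamma(T)=|S|$ and $S$ is the $\gamma$-code. Since $S$ has no leaf, $L'=L$, and by condition~2 each component $C_i$ of $T-(S\cup L)$ contains a vertex with at least two neighbours in $S$, because $\sum_{v\in V(C_i)}|N(v)\cap S|=|N(V(C_i))\cap S|=|V(C_i)|+1>|V(C_i)|$; so, in the notation of Proposition~\ref{prop.properties.S^*}, $r=k$ and $S^{*}=S\cup\bigcup_i V(C_i)=V(T)\setminus L$. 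Counting edges in the subtree $T-L$ — no edges inside $S$, $|V(C_i)|-1$ edges inside each $C_i$, and $|V(C_i)|+1$ edges between $C_i$ and $S$ — gives $|S|+\sum_i|V(C_i)|-1=\sum_i(|V(C_i)|-1)+\sum_i(|V(C_i)|+1)$, whence $\sum_i|V(C_i)|=|S|-1$ and $|S^{*}|=2|S|-1=2\gamma(T)-1$; equivalently one substitutes $r=k$, $\delta_i=1$ and $|E(S:S)|=0$ into the counting identity derived inside the proof of Proposition~\ref{prop.properties.S^*}.

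\textbf{Conclusion and main difficulty.} It remains to check that $S^{*}$ is a minimum $\gamma_{\stackrel{}{11}}$-set. If $|S|=1$ then $T$ is a star and $\gamma_{\stackrel{}{11}}(T)=1=2\gamma(T)-1$; otherwise $|S|\ge2$, some $C_i$ is non-empty, so $S$ is not a $\gamma_{\stackrel{}{11}}$-set and Proposition~\ref{prop.properties.S^*} applies, yielding that $S^{*}$ is a $\gamma_{\stackrel{}{11}}$-set, hence $\gamma_{\stackrel{}{11}}(T)\le|S^{*}|$. For the opposite inequality I would take an arbitrary $\gamma_{\stackrel{}{11}}$-code $D$: it contains every strong support vertex, so $S\subseteq D$, and then part~3 of Proposition~\ref{prop.properties.S^*} forces $S^{*}\subseteq D$, whence $\gamma_{\stackrel{}{11}}(T)=|D|\ge|S^{*}|$. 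Together these give $\gamma_{\stackrel{}{11}}(T)=|S^{*}|=2\gamma(T)-1$. The step I expect to be most delicate is the sufficiency direction: one must identify $S$ with the given $\gamma$-code so that Proposition~\ref{prop.properties.S^*} is applicable, and then extract the lower bound $\gamma_{\stackrel{}{11}}(T)\ge|S^{*}|$ from the minimality statement in its third part, while not forgetting the separate (trivial) treatment of the star case.
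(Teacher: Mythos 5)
Your proof is correct and follows essentially the same route as the paper's: both directions rest on Lemmas~\ref{lem.estructure.S^*} and~\ref{lem.support.vertices} together with the counting identity and the containment property of Proposition~\ref{prop.properties.S^*}, applied to the set $S$ of strong support vertices viewed as the unique $\gamma$-code. If anything, your write-up is slightly more careful than the paper's, since you explicitly dispose of the degenerate case $|S|=1$ and spell out the lower bound $\gamma_{\stackrel{}{11}}(T)\ge|S^{*}|$ via $\gamma_{\stackrel{}{11}}$-codes containing $S$.
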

\begin{proof}

If $\gamma_{\stackrel{}{11}}(T)=2\gamma (T) -1$, using Lemma~\ref{lem.estructure.S^*} and Lemma~\ref{lem.support.vertices}, it is clear that $T$ satisfies both conditions.

On the other hand suppose that $T$ satisfies conditions 1 and 2. Note that the set $S$ of strong support vertices is the unique $\gamma$-code of $T$. Moreover $S$, and hence its associated
$\gamma_{\stackrel{}{11}}$-set $S^*$, are contained in any $\gamma_{\stackrel{}{11}}$-set of $T$, so $S^*$ is the unique $\gamma_{\stackrel{}{11}}$-code of $T$. By hypothesis $S$ is independent so $E(S:S)=0$ and also any connected
component of $T\setminus (S\cup L)$ has a unique vertex with two neighbors in $S$ so $r=k$ and $\delta_i=1, \ i\in\{1,\dots, r\}$. Finally, using Equation~\ref{eq.final} we obtain
$$2|S|-1=|S^*|+\sum_{i=1}^{r}\delta_i+\sum_{i=r+1}^{k}|V(C_i)|-k+|E(S:S)|= |S^*|+r-r+0=|S^*|$$ and $2\gamma(T)-1=\gamma_{\stackrel{}{11}}(T)$, as desired.

\end{proof}

\vspace{1.5cc}
\noindent{\bf 3.3 Realization result}
\vspace{1cc}

A realization theorem for the inequalities chain $\gamma (T)\le \gamma_{\stackrel{}{11}}(T)\le 2\gamma(T)-1$ is presented. Note that, for every tree $T$ of order $n\ge 3$, Proposition~\ref{equality} and Theorem~\ref{thm.upperbound} give us two possible situations $\gamma (T) = \gamma_{\stackrel{}{11}}(T)\le  n/2$ or $\gamma (T) < \gamma_{\stackrel{}{11}}(T) <n/2$. In the following result we prove that both of them are feasible and parameters $\gamma$ and $\gamma_{\stackrel{}{11}}$ can take every possible value in each case.

\begin{prop}\label{prop.caterpillarrealization}
\begin{enumerate}
  \item Let $a,n$ be integers such that $1\le a$ and $n\ge  2a$, then
there exists a caterpillar $T$ of order $n$ such that $\gamma (T)=\gamma_{\stackrel{}{11}}(T)=a$.
  \item Let $a,b,n$ be integers such that $2\le a<  b\le 2a-1$ and $n> 2b$, then
there exists a caterpillar $T$ of order $n$ such that $\gamma (T)=a$ and $\gamma_{\stackrel{}{11}}(T)=b$.
\end{enumerate}
\end{prop}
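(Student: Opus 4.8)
The plan is to construct the required caterpillars explicitly by starting from a path and attaching pendant leaves (and in part~2, also subdividing to break perfection), then verify the two parameters by using the tools already available in the paper. For the computation of $\gamma_{\stackrel{}{11}}$ I would rely on Proposition~\ref{equality} and the general upper bound $\gamma_{\stackrel{}{11}}(T)\le 2\gamma(T)-1$ from Theorem~\ref{thm.upperbound}, together with the fact that for caterpillars the spine is a convenient dominating path; for the lower bounds on $\gamma$ I would exhibit a packing of private-neighbor gadgets (e.g.\ disjoint paths $P_3$ or pendant stars) forcing $\gamma(T)\ge a$.

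For part~1, I would take $a$ copies of the star $K_{1,2}$ (a path $P_3$) joined in a line through their centers, giving a caterpillar whose spine has $a$ interior vertices each carrying at least one leaf; the set of these $a$ spine vertices is simultaneously a dominating set and a perfect dominating set, so $\gamma(T)=\gamma_{\stackrel{}{11}}(T)\le a$. The matching lower bound $\gamma(T)\ge a$ follows since the $a$ leaves (one per spine vertex) have pairwise disjoint closed neighborhoods, so any dominating set needs $\ge a$ vertices. To reach an arbitrary order $n\ge 2a$, I would simply append the extra $n-2a$ vertices as additional leaves hung on existing spine vertices (or lengthen the spine while keeping it dominating); each such modification leaves a spine vertex as a strong support vertex, and one checks the natural dominating set stays perfect and of size $a$. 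Here Corollary~\ref{equality} is not needed, but Proposition~\ref{equality_caterpillar} reassures us the construction is a caterpillar.

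For part~2, the idea is to combine $b-a$ ``costly'' gadgets that force $\gamma_{\stackrel{}{11}}$ to exceed $\gamma$ by one each with $a-(b-a)=2a-b$ ``cheap'' gadgets that contribute equally to both parameters; note $2\le a<b\le 2a-1$ guarantees $1\le b-a\le a-1$ and $2a-b\ge 1$, so both block-counts are positive. A costly gadget is a small subtree in which a single spine vertex must lie in every dominating set but in a perfect dominating set one is additionally forced to include a second vertex of the gadget (the construction of $S^*$ in Theorem~\ref{thm.upperbound} and Proposition~\ref{prop.properties.S^*} describe exactly this phenomenon: a component $C_i$ with $|N(V(C_i))\cap S|=|V(C_i)|+1$ contributes one extra vertex). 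Concretely I would use a spider with two legs of length two attached at a spine vertex, or the subtree appearing in Figure~\ref{fig.realization.upper.bound} with $k=1$; stringing these along a spine, interleaved with plain $P_3$-gadgets, yields a caterpillar with $\gamma(T)=a$ and $\gamma_{\stackrel{}{11}}(T)=a+(b-a)=b$. The order is adjusted to the prescribed $n>2b$ by padding with extra leaves on cheap gadgets, which affects neither parameter.

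The main obstacle is the lower bound $\gamma_{\stackrel{}{11}}(T)\ge b$ in part~2: the upper bound side is handled by explicitly exhibiting a perfect dominating set of size $b$, but to show no smaller perfect dominating set exists I must argue that each costly gadget genuinely forces two vertices of any $\gamma_{\stackrel{}{11}}$-code. I would do this locally, using Lemma~\ref{lem.components} (a vertex outside the code has at most one neighbor per component of the induced subgraph) to show that in a costly gadget a perfect code cannot dominate all gadget vertices with a single vertex, while simultaneously a standard private-neighbor packing argument across all $a$ gadgets pins $\gamma$ down to exactly $a$; care is needed so that the forced extra vertices in distinct gadgets are distinct and the gadgets do not interfere through the spine, which the $P_3$-separators between consecutive gadgets ensure.
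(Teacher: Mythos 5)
Your part~1 is essentially the paper's construction (a spine of $a$ vertices, each made a support vertex, with the surplus vertices hung as extra leaves on one of them), and your verification is the right one: the spine is simultaneously a dominating and a perfect dominating set, and the disjoint closed neighborhoods of one pendant leaf per spine vertex give $\gamma(T)\ge a$. One bookkeeping slip: $a$ copies of $K_{1,2}$ glued through their centers already use $3a$ vertices, so ``appending $n-2a$ extra leaves'' does not yield order $n$; the base must be one leaf per spine vertex (order $2a$), as in the paper, before padding.

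Part~2 contains a genuine gap: your concrete costly gadget does not do what you need. A spine vertex $u$ carrying two legs $u x_1 y_1$ and $u x_2 y_2$ of length two (i) destroys the caterpillar property required by the statement, since after deleting leaves $u$ still has $x_1$, $x_2$ and a spine neighbor, hence degree at least three; (ii) does not force $u$ into every dominating set, because $\{x_1,x_2\}$ dominates the whole gadget including $u$; (iii) creates no gap at all, because $\{x_1,y_2\}$ is simultaneously a minimum dominating set and a perfect dominating set of the gadget; and (iv) contributes $2$, not $1$, to $\gamma$ (the closed neighborhoods of $y_1$ and $y_2$ are disjoint), so $b-a$ such gadgets together with $2a-b$ cheap ones would give $\gamma(T)=2(b-a)+(2a-b)=b$ rather than $a$. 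Your fallback reference to Figure~\ref{fig.realization.upper.bound} with $k=1$ is left unverified, and the forcing step you yourself identify as the main obstacle is exactly what is missing; Lemma~\ref{lem.components} alone does not provide it. The gadget that works, and is what the paper uses, stays at distance one from the spine: a strong support vertex $u_i$ (two pendant leaves) followed by a leafless degree-two spine vertex $v_i$ whose other spine neighbor is again a strong support vertex. Since every $\gamma_{\stackrel{}{11}}$-code of a tree contains all its strong support vertices (the proposition recalled in Section~2), a vertex $v_i$ outside the code would have two code neighbors, so $v_i$ is forced into every perfect dominating set; this gives exactly $+1$ per gadget for $\gamma_{\stackrel{}{11}}$, while the pendant-leaf packing keeps $\gamma(T)=a$. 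With that replacement your interleaving plan becomes the paper's caterpillar $u_1,v_1,\dots,u_{b-a},v_{b-a},u_{b-a+1},\dots,u_a$ with two leaves on each $u_i$ for $i\le b-a$, $n-2b+1\ge 2$ leaves on $u_{b-a+1}$, and one leaf on each remaining $u_j$.
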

\begin{proof}
  \begin{enumerate}
    \item Consider the caterpillar obtained by attaching a leaf to each of the first $a-1$ vertices of a path of order $a$ and $n-2a+1\ge 1$ leaves to the last vertex of the path (see Figure \ref{fig.caterpillar1}). Then the vertices of the path is both a $\gamma$-code and a $\gamma_{\stackrel{}{11}}$-code, and $\gamma (T)=\gamma_{\stackrel{}{11}}(T)=a$.
\begin{figure}[!hbt]
\begin{center}
\includegraphics[width=0.5\textwidth]{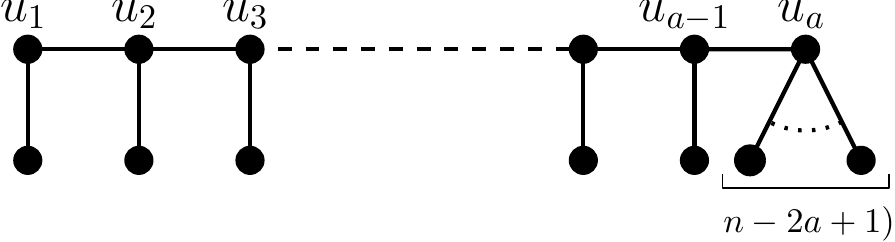}
\caption{$T$ has order $n$, $\gamma (T)=\gamma_{\stackrel{}{11}}(T)=a$.}\label{fig.caterpillar1}
\end{center}
\end{figure}
\item Note that $\gamma (T)=1$ implies $\gamma_{\stackrel{}{11}}(T)=1$, so if both parameter do not agree then  $\gamma (T)\ge 2$.

Using that $1\le b-a\le a-1$, let $P$ be the path of order $b$ with consecutive vertices labeled with
$$u_1,v_1,\dots ,u_{b-a},v_{b-a},u_{b-a+1},u_{b-a+2},\dots ,u_{a}$$
and consider the caterpillar obtained by attaching two leaves to each of the vertices $u_1,u_2,\dots ,u_{b-a}$, one leaf to each of the vertices $u_{b-a+2},u_{b-a+3}, \dots ,u_{a}$ and $n-2b+1$ leaves to vertex $u_{b-a+1}$ (see Figure \ref{fig.caterpillar2}). Since $n-2b+1\ge 2$ we obtain that $\{ u_1,u_2,\dots ,u_a\}$ is a  $\gamma$-code with $a$ vertices and $\{ u_1,u_2,\dots ,u_a\} \cup \{ v_1,\dots ,v_{b-a} \}$ is a $\gamma_{\stackrel{}{11}}$-code with $b$ vertices.
  \end{enumerate}
\end{proof}
\begin{figure}[!hbt]
\begin{center}
\includegraphics[width=0.7\textwidth]{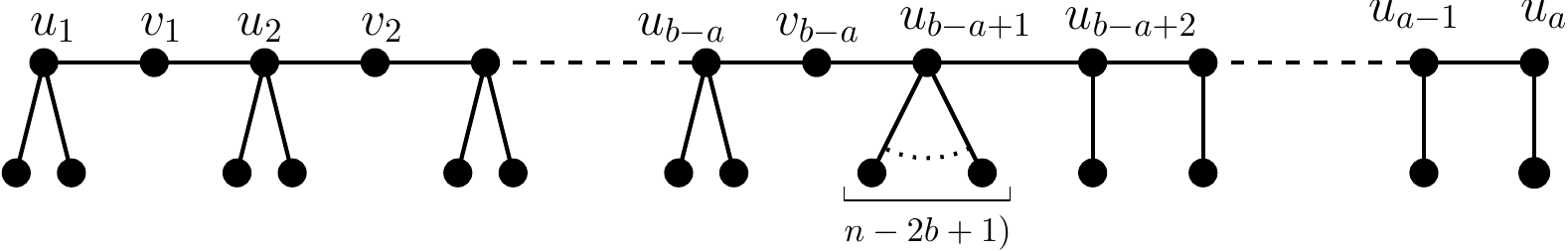}
\caption{$T$ has order $n > 2b$, $a=\gamma (T)<\gamma_{\stackrel{}{11}}(T)=b\le 2a-1$.
%The set $\{ u_1,\dots ,u_a \}$ is  a dominating code and the set $\{ u_1,\dots ,u_a, v_1,\dots , v_{b-a} \}$ is a $\gamma_{\stackrel{}{11}}$-code.
}\label{fig.caterpillar2}
\end{center}
\end{figure}

\vspace{1.5cc}
\begin{center}
{\bf 4. REALIZATION OF THE QP-CHAIN}
\end{center}
\vspace{1cc}

%\noindent{\bf 4.1 Realization}
%\vspace{1cc}

In this Section we present a general realization theorem for the QP-chain and we obtain trees achieving any feasible relationship among the quasiperfect parameters.
We begin with some previous technical results.

\begin{lem}\label{lem.hojas}
If $u$ is a vertex of a graph $G$ with at least $d$ leaves in its neighborhood, then $u$ is in every $\gamma_{\stackrel{}{1h}}$-set, for any $h\in \{ 1,\dots , d-1\}$.
\end{lem}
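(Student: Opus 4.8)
The plan is to argue by contradiction: suppose $u$ has at least $d$ leaves $\ell_1,\dots,\ell_d$ in its neighborhood but $u\notin S$ for some $\gamma_{\stackrel{}{1h}}$-set $S$ with $h\le d-1$. First I would note that, since $S$ is a dominating set and each $\ell_i$ is a leaf whose only neighbor is $u$, every $\ell_i$ that is not in $S$ must be dominated by $u$; but $u\notin S$, so in fact every $\ell_i$ must belong to $S$. Thus $\{\ell_1,\dots,\ell_d\}\subseteq S$.

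Next I would look at the vertex $u$ itself. Since $u\notin S$ and $S$ is a $\gamma_{\stackrel{}{1h}}$-set, $u$ is adjacent to at most $h$ vertices of $S$. However, $u$ is adjacent to all of $\ell_1,\dots,\ell_d$, and these are $d\ge h+1$ vertices of $S$, so $u$ has at least $h+1$ neighbors in $S$, contradicting the defining property of a $\gamma_{\stackrel{}{1h}}$-set. Hence $u\in S$ for every $\gamma_{\stackrel{}{1h}}$-set, for any $h\in\{1,\dots,d-1\}$.

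The argument is short and there is no real obstacle; the only point requiring a moment's care is the first step, namely ruling out the alternative that some $\ell_i$ lies outside $S$ and is dominated "through" $u$ — this cannot happen precisely because $u$, being absent from $S$, cannot dominate anything, and a leaf has no other neighbor. It is worth stating this explicitly so the reader sees why all the leaves are forced into $S$ before the counting contradiction at $u$ is invoked.
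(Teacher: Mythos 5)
Your proof is correct and follows essentially the same argument as the paper: assume $u\notin S$, observe that each leaf neighbor of $u$ must then lie in $S$ (its only neighbor is $u$), and conclude that $u$ has at least $d>h$ neighbors in $S$, contradicting the $\gamma_{\stackrel{}{1h}}$-set condition. The explicit justification that the leaves are forced into $S$ is a nice touch but does not change the argument.
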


\begin{proof} Let $h\in \{ 1,\dots , d-1\}$ be and let $S$ be a $\gamma_{\stackrel{}{1h}}$-set of $G$ such that $u\notin S$. Then every leaf adjacent to $u$ must be in $S$, so $u$ has at least $d$ neighbors in $S$, with $d>h$, a contradiction.
\end{proof}

\begin{cor}\label{cor.hojas}
If $G$ is a graph with maximum degree $\Delta$ and $u$ is a  vertex with at least $\Delta -1$ leaves in its neighborhood, then $u$ is in every $\gamma_{\stackrel{}{1h}}$-code, for any $h\in \{ 1,\dots ,  \Delta -2 \}$.
\end{cor}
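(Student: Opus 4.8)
The plan is to obtain this statement as an immediate consequence of Lemma~\ref{lem.hojas}. The hypothesis provides a vertex $u$ with at least $\Delta-1$ leaves in its neighborhood, so I would apply Lemma~\ref{lem.hojas} with the parameter $d=\Delta-1$. That lemma then yields that $u$ lies in every $\gamma_{\stackrel{}{1h}}$-set of $G$ for each $h\in\{1,\dots,d-1\}$; substituting $d=\Delta-1$ turns this index range into exactly $\{1,\dots,\Delta-2\}$, which is precisely the range asserted in the corollary.

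The only remaining observation is a definitional one: a $\gamma_{\stackrel{}{1h}}$-code is, by definition, a $\gamma_{\stackrel{}{1h}}$-set of minimum cardinality, hence in particular a $\gamma_{\stackrel{}{1h}}$-set. Therefore every $\gamma_{\stackrel{}{1h}}$-code inherits the property just established and must contain $u$. This completes the argument.

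There is no real obstacle here; the corollary is essentially a reindexing of the lemma. The only points worth flagging are that the substitution $d=\Delta-1$ makes the two index sets coincide, and that when $\Delta\le 2$ the index set $\{1,\dots,\Delta-2\}$ is empty and the claim holds vacuously (and in that degenerate situation $\deg(u)\ge\Delta-1$ is still consistent, since $\deg(u)\le\Delta$ always). I would state the proof in one or two sentences, simply invoking Lemma~\ref{lem.hojas} and noting that a $\gamma_{\stackrel{}{1h}}$-code is a $\gamma_{\stackrel{}{1h}}$-set.
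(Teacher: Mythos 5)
Your proposal is correct and matches the paper's (implicit) argument exactly: the corollary is stated without a separate proof precisely because it is the instance $d=\Delta-1$ of Lemma~\ref{lem.hojas}, combined with the observation that every $\gamma_{\stackrel{}{1h}}$-code is in particular a $\gamma_{\stackrel{}{1h}}$-set. Your remarks on the reindexing and the vacuous case $\Delta\le 2$ are accurate and need no further elaboration.
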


The following Lemma is trivial.

\begin{lem}\label{lem.suportvertices}
Let $T$ be a tree with maximum degree $\Delta $ and $s$ support vertices. Then $\gamma_{\stackrel{}{1\Delta}} (T) =\gamma (T) \ge s$.
\end{lem}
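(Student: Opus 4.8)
The plan is to treat the two assertions separately, since each is essentially definitional once the appropriate observation is invoked. For the equality $\gamma_{\stackrel{}{1\Delta}}(T)=\gamma(T)$, I would simply note that in a graph of maximum degree $\Delta$ every vertex has at most $\Delta$ neighbours in total, hence in particular at most $\Delta$ neighbours inside any vertex subset $S$. Consequently the additional condition in the definition of a $k$-quasiperfect dominating set is vacuous when $k=\Delta$, so the $\gamma_{\stackrel{}{1\Delta}}$-sets of $T$ are precisely its dominating sets and the two parameters coincide. This is exactly the remark already made in the Introduction when the QP-chain was defined.

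For the inequality $\gamma(T)\ge s$ I would use the Observation from Section~2 guaranteeing, for a tree of order at least $3$, a $\gamma$-code $D$ that contains no leaf. Fix an enumeration $v_1,\dots,v_s$ of the support vertices of $T$ and, for each $i$, pick a leaf $\ell_i$ adjacent to $v_i$. Since $\ell_i\notin D$ while $\ell_i$ must be dominated, and the unique neighbour of $\ell_i$ is $v_i$, we get $v_i\in D$. Hence $\{v_1,\dots,v_s\}\subseteq D$, and as these $s$ vertices are pairwise distinct, $\gamma(T)=|D|\ge s$. An equivalent direct argument avoids the Observation entirely: for $n\ge 3$ a leaf $\ell_i$ adjacent to a support vertex $v_i$ has closed neighbourhood $N[\ell_i]=\{\ell_i,v_i\}$, and distinct support vertices give disjoint such pairs (a leaf has a unique neighbour), so any dominating set, having to meet each $N[\ell_i]$, contains at least $s$ vertices.

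There is no real obstacle here; the only point requiring a word of care is the degenerate small cases, which is why the statement is understood under the standing hypothesis $n\ge 3$ used throughout the paper. For $n=1$ there are no support vertices and the bound is vacuous, while $P_2$ is the single configuration excluded by that hypothesis (both of its vertices are support vertices yet $\gamma(P_2)=1$). Apart from flagging this, the proof is routine, which is why the authors label the lemma trivial.
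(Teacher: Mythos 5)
Your proposal is correct. The paper itself offers no argument at all (it simply declares the lemma trivial), and both of your arguments supply the standard justification: the condition ``at most $\Delta$ neighbours in $S$'' is vacuous, so $\gamma_{\stackrel{}{1\Delta}}(T)=\gamma(T)$; and the pairwise disjoint closed neighbourhoods $N[\ell_i]=\{\ell_i,v_i\}$ of one chosen leaf per support vertex force any dominating set to have at least $s$ vertices (equivalently, a leaf-free $\gamma$-code must contain every support vertex). Your remark that $P_2$ must be excluded, since there $s=2>\gamma(P_2)=1$, is a legitimate caveat consistent with the paper's standing assumption of order at least $3$.
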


%\begin{proof}
%For every support vertex $u$ of a tree $T$, at least vertex $u$ or one of the leaves hanging from $u$ must be in a dominating set, otherwise there is no vertex dominating the leaves hanging from $u$.
%\end{proof}

Let $T$ be a tree with maximum degree $\Delta \ge 3$. The next theorem shows that for each inequality of the QP-chain both possibilities, the equality and the strict inequality, are feasible.

\begin{thm}\label{thm.cadena} For any $\Delta \ge 3$, there exists a tree $T$ with maximum degree $\Delta $ satisfying each one of the $2^{\Delta -1}$ possible combinations of the inequalities of the QP-chain
\begin{align*}
  \gamma_{\stackrel{}{11}} (T) \, \ge  \, \gamma_{\stackrel{}{12}} (T) \, \ge \, \gamma_{\stackrel{}{13}} (T) \, \ge \, \dots \, \ge \, \gamma_{\stackrel{}{1(\Delta -1)}} (T) \,  \ge \, \gamma_{\stackrel{}{1\Delta}} (T) =\gamma (T)
\end{align*}
\end{thm}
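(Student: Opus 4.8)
The plan is to build, for each of the $2^{\Delta-1}$ prescribed patterns, a single tree realizing it by gluing together "gadgets", one gadget per inequality position $i\in\{1,\dots,\Delta-1\}$, where the gadget forces $\gamma_{\stackrel{}{1i}}(T)>\gamma_{\stackrel{}{1(i+1)}}(T)$ when the pattern demands a strict drop at step $i$, and contributes nothing to any strictness when the pattern demands equality there. A pattern is a subset $I\subseteq\{1,\dots,\Delta-1\}$ of positions where the inequality is strict; we want a tree whose QP-chain drops exactly at the indices in $I$. Since the chain is monotone by definition (the excerpt's QP-chain display), it suffices to control, for each $i$, whether $\gamma_{\stackrel{}{1i}}=\gamma_{\stackrel{}{1(i+1)}}$ or $\gamma_{\stackrel{}{1i}}>\gamma_{\stackrel{}{1(i+1)}}$.

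The key building block is a "threshold gadget" for a given level $i$: take a vertex $w$, attach exactly $i+1$ leaves to it, and attach $w$ by an edge to a long path or to the rest of the tree so that $w$ itself is never forced into a minimum $\gamma_{\stackrel{}{1h}}$-code for $h\le i-1$ by Corollary~\ref{cor.hojas} (the leaves make $w$ mandatory for codes at levels $h\le i-1$ but optional at level $i$ and above), while at level $i$ one has the genuine choice of either keeping $w$ (costing $1$ plus forcing structure elsewhere) or replacing $w$'s domination role by a cheaper configuration that is only legal once a vertex is allowed $i$ neighbors in the code. Concretely, I would attach to a common path several copies of a subtree in which a non-code vertex must see exactly $i$ support vertices; such a vertex is illegal in a $\gamma_{\stackrel{}{1(i-1)}}$-set but fine in a $\gamma_{\stackrel{}{1i}}$-set, so the minimum size strictly decreases precisely when passing from level $i-1$ to level $i$. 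To also make a jump at step $i$ mean $\gamma_{\stackrel{}{1i}}>\gamma_{\stackrel{}{1(i+1)}}$, the gadget is indexed so that the "expensive vs. cheap" dichotomy sits between consecutive levels; I would use one gadget whose critical threshold is between $i$ and $i+1$ for each $i\in I$, and make sure their interactions are independent by separating them along a long enough dominating path (so the whole tree stays a tree, and support vertices/leaf counts of different gadgets do not interfere). Lemma~\ref{lem.suportvertices} and Lemma~\ref{lem.hojas} give the clean lower bounds: counting support vertices bounds $\gamma$ from below, and leaf-degree bounds force specific vertices into the relevant codes, so the upper bound obtained by exhibiting an explicit $\gamma_{\stackrel{}{1h}}$-set will match.

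Executing this, I would (1) fix the pattern $I$; (2) describe the skeleton path $P$ and, for each $i\in\{1,\dots,\Delta-1\}$, attach the gadget $G_i^{\text{strict}}$ if $i\in I$ and a trivial "pendant path/leaf" filler otherwise, ensuring maximum degree exactly $\Delta$ (some vertex needs $\Delta$ neighbors; put the $\Delta$-degree vertex in whichever gadget needs it, or add $\Delta-1$ leaves at a fixed path vertex); (3) compute $\gamma_{\stackrel{}{1h}}(T)$ for each $h$ by: lower bound from Lemma~\ref{lem.suportvertices} / Lemma~\ref{lem.hojas} plus a parity/counting argument on each gadget, and matching upper bound by exhibiting a $\gamma_{\stackrel{}{1h}}$-set that is "cheap in gadget $G_i^{\text{strict}}$" exactly for $i\le h$; (4) conclude that the value drops at $i$ iff $i\in I$. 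The main obstacle I anticipate is step (3): guaranteeing that the gadgets are genuinely independent, i.e. that a $\gamma_{\stackrel{}{1h}}$-code cannot "cheat" by exploiting interactions across the path between two gadgets, and that the threshold of gadget $G_i$ is sharp (no legal $\gamma_{\stackrel{}{1(i-1)}}$-set sneaks below the claimed value). This is handled by making the connecting path long (length a few times $\Delta$) so every minimum code restricted to a gadget is forced, and by the standard trick of choosing the filler pieces (plain pendant paths of length $3$, say) so that their $\gamma_{\stackrel{}{1h}}$ behavior is constant in $h$ and adds a fixed, easily-computed constant to every term of the chain. With that in place the realization follows by directly reading off the computed chain for each of the $2^{\Delta-1}$ choices of $I$.
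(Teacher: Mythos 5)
Your overall strategy is the same one the paper uses in its main case: string together, along a spine, one gadget per index where a strict drop is prescribed, where a gadget is a ``hub'' vertex adjacent to several support vertices each loaded with $\Delta-1$ leaves, so that Corollary~\ref{cor.hojas} forces those support vertices into every $\gamma_{\stackrel{}{1h}}$-code for $h\le\Delta-2$ and the hub becomes optional exactly when the level reaches its threshold. However, as written your proposal has concrete gaps. First, your initial gadget (``take a vertex $w$, attach exactly $i+1$ leaves to it'') does not produce any drop: if $w$ is left out of a code, all $i+1\ge 2$ of its leaves must go in, which costs more than putting $w$ in, so every minimum $\gamma_{\stackrel{}{1h}}$-set contains $w$ at every level and the chain is unaffected. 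The working gadget is the one you only gesture at afterwards (a hub seeing $i+1$ \emph{forced} support vertices), and it needs to be made explicit, including how many legs each hub gets as a function of the prescribed drop position.

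Second, the part you yourself flag as the obstacle is exactly the content of the proof, and your proposed fix (``make the connecting path long, a few times $\Delta$'') works against you: extra spine vertices must themselves be dominated, which adds $h$-dependent contributions and reopens the independence question rather than closing it. The paper avoids this by using a \emph{short} spine $u_{i_1},\dots,u_{i_k},v,w$ whose vertices are already dominated by the forced support vertices, so the only freedom left is which hubs enter the code, and the lower bound comes from noting that a hub with more than $h$ forced neighbours cannot be omitted from a $\gamma_{\stackrel{}{1h}}$-set. Third, you do not address the boundary situations that the paper must (and does) treat separately: a strict drop at position $\Delta-1$ requires a vertex with $\Delta$ code neighbours while keeping maximum degree $\Delta$, which the paper achieves by placing a special gadget at the end vertex $w$ of the spine with the adjacent code vertex $v$ supplying the $\Delta$-th dominator (Case 2.2), and likewise the pattern with all equalities except possibly the last, and the whole case $\Delta=3$ (where a hub with two loaded legs plus two spine neighbours would already exceed degree $3$), are handled by separate explicit trees. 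Without the corrected gadget, the explicit codes with matching lower bounds, and these boundary constructions, the argument is a plan rather than a proof.
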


\begin{proof} Let $\Delta \ge 3$. For all $i\in \{ 1,\dots ,\Delta -1\}$, we write $\circledast_i$ for the symbol  `$=$' or `$>$' in $\gamma_{\stackrel{}{1i}} (T) \, \ge  \, \gamma_{\stackrel{}{1(i+1)}} (T)$.

\begin{enumerate}[]

%%%%%%%%%%%%%%%%%%%%% cas 1
\item

Case 1. If $\circledast_i$ is `$=$' for all $i\in \{ 1,\dots ,\Delta -2\}$.
We distinguish two subcases.

\begin{enumerate}[]
  \item Case 1.1. If $\circledast_{\Delta -1}$ is `$=$'. The star $T=K_{1,\Delta}$ is a tree with maximum degree $\Delta$ satisfying:
 $$\gamma_{\stackrel{}{11}} (T) \, = \, \gamma_{\stackrel{}{12}} (T) \, =
 %\, \gamma_{\stackrel{}{13}} (T) \, =
 \, \dots \, = \, \gamma_{\stackrel{}{1(\Delta -1)}} (T) \,  = \, \gamma_{\stackrel{}{1\Delta}} (T) =\gamma (T)=1.$$

 \item  Case 1.2. If $\circledast_{\Delta -1}$ is `$>$'.
We consider the tree $T$ in Figure~\ref{fig.casIguals_new}. We easily derive from Corollary \ref{cor.hojas} that $\{x_1,\dots ,x_{\Delta} \}$ is a $\gamma$-code and
 $\{u, x_1,\dots ,x_{\Delta} \}$ is a $\gamma_{\stackrel{}{1i}}$-code for any $i$ such that $i<\Delta$. Therefore, $T$ satisfies
 $$\Delta +1=\gamma_{\stackrel{}{11}} (T) \, = \, \gamma_{\stackrel{}{12}} (T) \, =
 %\, \gamma_{\stackrel{}{13}} (T) \, =
 \, \dots \, = \, \gamma_{\stackrel{}{1(\Delta -1)}} (T) \,  > \, \gamma_{\stackrel{}{1\Delta}} (T) =\gamma (T)=\Delta.$$

\begin{figure}[!hbt]
\begin{center}
\includegraphics[width=0.35\textwidth]{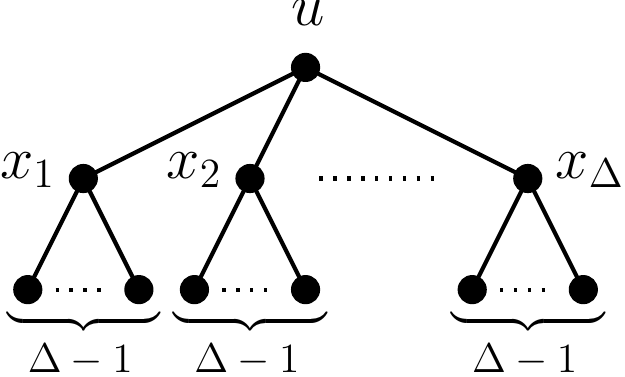}
\caption{Trees illustrating Case 1.2 of Theorem \ref{thm.cadena}.}\label{fig.casIguals_new}
\end{center}
\end{figure}

\end{enumerate}
%%%%%%%%%%%%%%%%%%%%%%%%%% fi cas 1

%%%%%%%%%%%%%%%%%%%%%%% cas 2
\item Case 2. If $\circledast_i$ is `$>$' for some $i\in \{ 1,\dots ,\Delta -2\}$.

If $\Delta=3$, consider the graphs shown in Figure \ref{fig.casDelta3}.
The tree $T$ on the left side satisfies $6=\gamma_{\stackrel{}{11}} (T) \, > \, \gamma_{\stackrel{}{12}} (T) \, = \gamma_{\stackrel{}{1,3}} (T) =\gamma (T)=4$, since support vertices form a $\gamma$-code (and also a $\gamma_{\stackrel{}{12}}$-code and a $\gamma_{\stackrel{}{13}}$-code), and all vertices but the leaves form a $\gamma_{\stackrel{}{11}}$-code.
The tree $T$ on the right side satisfies $\gamma_{\stackrel{}{11}} (T) = 18 \, > \, \gamma_{\stackrel{}{12}} (T) = 12\, > \gamma_{\stackrel{}{1,3}} (T) =\gamma (T)=11$, since support vertices  together with vertex $u$ form a $\gamma$-code (and also a $\gamma_{\stackrel{}{13}}$-code),
support vertices  together with vertices $u$ and $v$ form a  $\gamma_{\stackrel{}{12}}$-code,
and all vertices but the leaves form a $\gamma_{\stackrel{}{11}}$-code.

\begin{figure}[!hbt]
\begin{center}
\includegraphics[width=0.7\textwidth]{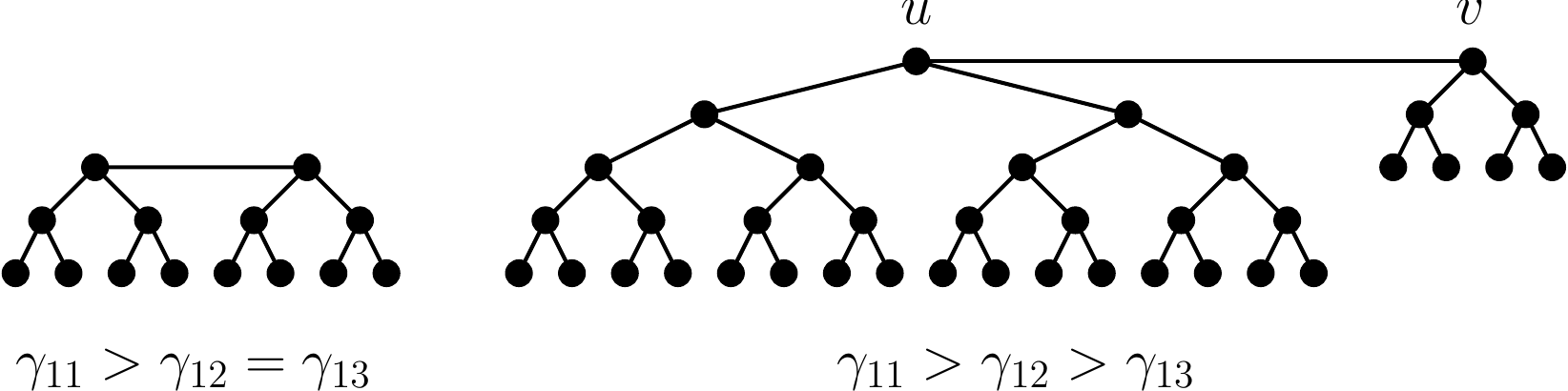}
\caption{Trees illustrating Case 2 of Theorem \ref{thm.cadena} when $\Delta=3$.}\label{fig.casDelta3}
\end{center}
\end{figure}

Now suppose $\Delta \ge 4$. Let
 $$\{ i_1,i_2,\dots ,i_k \} = \{ j \, : \, \gamma_{\stackrel{}{1j}} (T) > \gamma_{\stackrel{}{1(j+1)}} (T) \, , \, j\le \Delta -2 \},$$
where $k\ge 1$ by hypotheses, and assume $1\le i_1<\dots <i_k\le \Delta -2$.
 We distinguish two subcases.

\begin{enumerate}[]
  \item  Case 2.1. If $\circledast_{\Delta -1}$ is `$=$'.

  Consider a path $P$ of length $k+2$ with consecutive vertices labeled $u_{i_1},\dots ,u_{i_k}, v, w$. Attach $i_j$ new vertices to $u_{i_j}$ and $\Delta -1$ leaves to each one of those new vertices. Attach also $\Delta -2$ leaves to vertex $v$.

  For each vertex $x$ of the path $P$, let  $N'(x)$ be the set of vertices of $N(x)$ not belonging to the path $P$. Let $A=\cup_{j=1}^k N'(u_{i_j})$.

  It is clear that $A\cup \{ v \}$ is a $\gamma$-code of $T$, and also a $\gamma_{\stackrel{}{1(\Delta-1)}}$-code. Moreover, $A\cup \{ v \}\cup \{u_{i_j}: h\le j\le k \}$ is a $\gamma_{\stackrel{}{1i}}$-code if $i_{h-1}< i \le i_h$.

   \item Case 2.2. If $\circledast_{\Delta -1}$ is `$>$'.

  Consider the tree constructed in case 2.1 and attach $\Delta -1$ new vertices to $w$ and $\Delta -1$ leaves to each one of those new vertices.

  With the same notations as in Case 2.1, it is easy to verify that $A\cup \{ v \}\cup N'(w)$ is a $\gamma$-code of $T$ and $A\cup \{ v,w \}\cup N'(w)$ is a $\gamma_{\stackrel{}{1(\Delta-1)}}$-code. Moreover,
  $A\cup \{ v,w \}\cup N'(w) \cup  \{u_{i_j}: h\le j\le k \}$ is a $\gamma_{\stackrel{}{1i}}$-code if $i_{h-1}< i \le i_h$.
\end{enumerate}
%%%%%%%%%%%%%%%%%%%%%%%%%% fi cas 2
\end{enumerate}\end{proof}

\begin{figure}[!hbt]
\begin{center}
\includegraphics[width=0.8\textwidth]{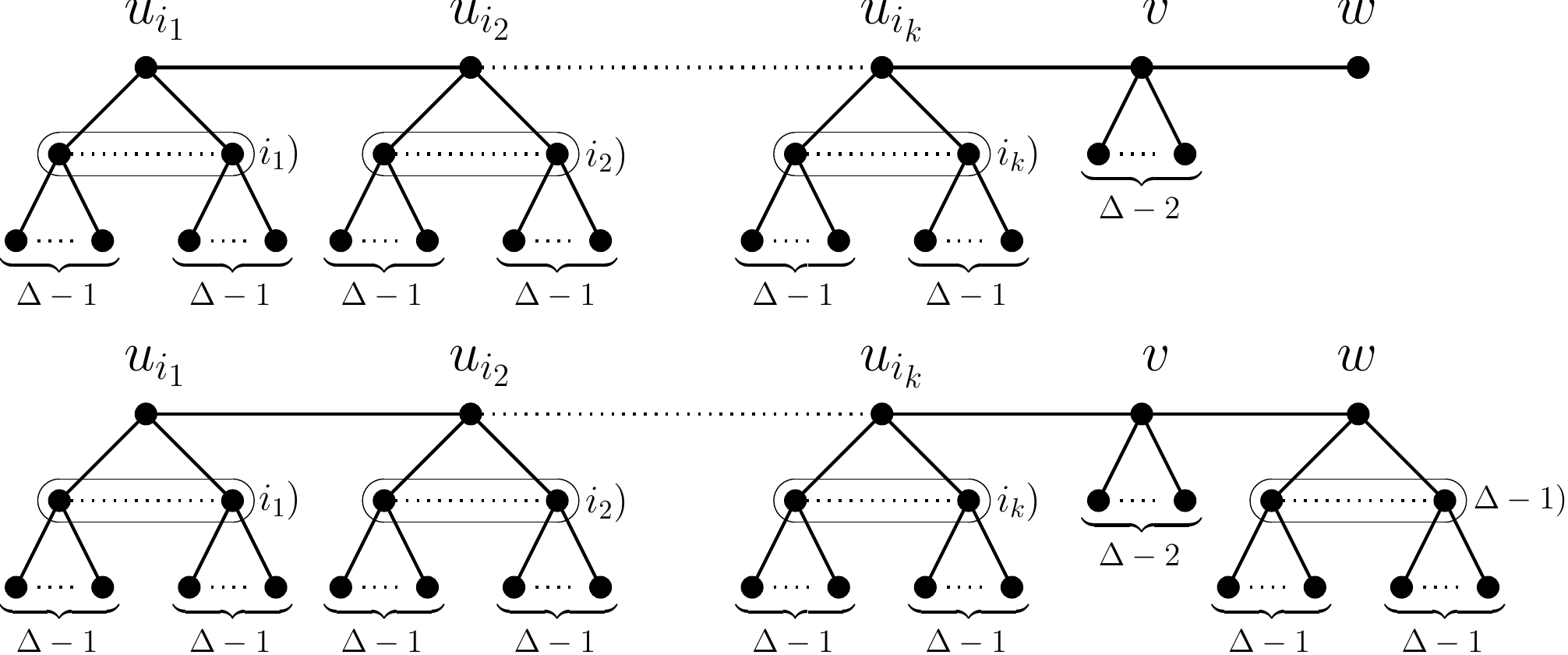}
\caption{Trees illustrating Case 2.1 (above) and Case 2.2 (bottom) of Theorem \ref{thm.cadena}, when $\Delta \ge 4$.}\label{fig.cas2}
\end{center}
\end{figure}

\newpage

\begin{center}
{\bf 5. A LINEAR ALGORITHM FOR TREES}
\end{center}
\vspace{1cc}

The objective of this Section is to devise a linear algorithm for computing $\gamma_{\stackrel{}{1k}}(T)$ for a tree $T$, which answers a question posed in \cite{chhahemc13}, where authors show that the decision problem of determine if a graph has a $\gamma_{\stackrel{}{12}}$-set of cardinality at most $r$ is NP-complete for bipartite graphs. Moreover in \cite{chachevi10} it is shown that the same problem for $\gamma_{\stackrel{}{11}}$-sets is also NP-complete. We will follow the ideas of dynamic programming which appear in~\cite{dhhhml}, where an algorithm to compute the nearly perfect number of a tree in linear time is given.

We will use an operation on rooted trees called \emph{composition}. The composition $(T_1,r_1)\circ (T_2,r_2)$ of two rooted trees is defined as the tree $(T,r_1)$ where $V(T)=V(T_1)\cup V(T_2)$, $E(T)=E(T_1)\cup E(T_2)\cup \{r_1r_2\}$ and its root is $r_1$. The class of rooted trees can be constructed by using this operation and $K_1$ as initial rooted tree with its unique vertex as root.

Let $(T,r)$ be a rooted tree and $S$ a subset of vertices. For a fixed positive integer $k$, we give the next definitions:
\begin{itemize}
\item $S\in A$ if $S$ is a $\gamma_{\stackrel{}{1k}}$-set of $T$, $r\notin S$ and $|N(r)\cap S|=k$.
\item $S\in B$ if $S$ is a $\gamma_{\stackrel{}{1k}}$-set of $T$, $r\notin S$ and $|N(r)\cap S|\leq k-1$.
\item $S\in C$ if $S$ is a $\gamma_{\stackrel{}{1k}}$-set of $T$ and $r\in S$.
\item Finally, $S\in D$ if $S$ is a $\gamma_{\stackrel{}{1k}}$-set of $T-r$ and $N[r]\cap S=\emptyset$.
\end{itemize}

Clearly any $\gamma_{\stackrel{}{1k}}$-set of $T$ belongs to just one of types $A$, $B$ or $C$. The key point in the algorithm is that all the sets of one type can be built in a bottom up form using sets of the above types, which is proved by the next results.

\begin{prop}
\label{th:sets}
Let $(T,r)=(T_1,r)\circ (T_2,r_2)$ be a rooted tree which is the composition of two rooted trees, and let $S$ be a $\gamma_{\stackrel{}{1k}}$-set of $T$. We denote $S_1=S\cap V(T_1)$ and $S_2\cap V(T_2)$. Then:
\begin{enumerate}
\item $S\in A$ if and only if one of the following conditions holds:
    \begin{enumerate}
    \item $|N(r)\cap S_1|=k$, $S_1\in A$ and $S_2\in A\cup B$,
    \item $|N(r)\cap S_1|=k-1$, $S_1\in B$ and $S_2\in C$.
    \end{enumerate}
\item $S\in B$ if and only if one of the following conditions holds:
    \begin{enumerate}
    \item $|N(r)\cap S_1|=k-1$, $S_1\in B$ and $S_2\in A\cup B$,
    \item $|N(r)\cap S_1|\leq k-2$, $S_1\in B$ and $S_2\in A\cup B\cup C$,
    \item $|N(r)\cap S_1|\leq k-2$, $S_1\in D$ and $S_2\in C$.
    \end{enumerate}
\item $S\in C$ if and only if $S_1\in C$ and $S_2\in B\cup C\cup D$.

\end{enumerate}
\end{prop}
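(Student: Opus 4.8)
The plan is to prove each of the three biconditionals by a careful case analysis on how the root $r$ interacts with the edge $r r_2$ connecting $T_1$ to $T_2$. The governing observations are: (i) a vertex $v \in V(T_1)\setminus\{r\}$ has the same closed neighborhood in $T$ as in $T_1$, and similarly for $v \in V(T_2)\setminus\{r_2\}$; so the domination/quasiperfect condition at every vertex other than $r$ and $r_2$ is inherited unchanged from the corresponding piece; (ii) $N_T(r) = N_{T_1}(r)\cup\{r_2\}$ and $N_T(r_2)=N_{T_2}(r_2)\cup\{r\}$; (iii) $r_2\in S$ iff $S_2\in C$, $r_2\notin S$ with $r_2$ still dominated inside $T_2$ iff $S_2\in A\cup B$, and $r_2\notin S$ with $N_{T_2}[r_2]\cap S_2=\emptyset$ iff $S_2\in D$. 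These let me translate "what $r$ sees" and "what $r_2$ sees" into the type labels of $S_1$ and $S_2$.

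First I would handle part (3), which is the easiest: if $r\in S$ then $S_1\in C$ is forced, and since $r\in S$ the vertex $r_2$ is automatically dominated by $r$, so the only constraint transmitted to $T_2$ is that $S_2$ must be a $\gamma_{\stackrel{}{1k}}$-set of $T_2$ with $r_2$ possibly out (every nonroot vertex of $T_2$ is fine by inheritance, and $r_2$, if out, already has at most $k$ $S$-neighbors in $T_2$ since the extra neighbor $r$ would only matter for the "$\le k$" count — here one must check that adding $r$ to $r_2$'s count keeps it $\le k$; this is exactly why $S_2\in A$ is excluded, forcing $S_2\in B\cup C\cup D$). Conversely, if $S_1\in C$ and $S_2\in B\cup C\cup D$, I verify that $S$ dominates everything with the right multiplicities: nonroot vertices by inheritance, $r_2$ either in $S$ or dominated (by $r$ at worst, and its count is $\le (k-1)+1=k$ if $S_2\in B$, or handled if $S_2\in D$ via $r$), and $r\in S$. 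For parts (1) and (2) I split according to whether $r_2\in S$ (i.e. $S_2\in C$) or not (i.e. $S_2\in A\cup B$, recording which), and in each branch I count $|N_T(r)\cap S| = |N_{T_1}(r)\cap S_1| + [\,r_2\in S\,]$ and compare to $k$; this forces $S_1$ into $A$, $B$, or $D$ with the stated residual count on $|N_{T_1}(r)\cap S_1|$, and forces the domination-at-$r_2$ condition, which translates into $S_2$'s type. The case $S_1\in D$ arises precisely when $N_{T_1}[r]\cap S_1=\emptyset$ and $r$ is then dominated only through $r_2$, which requires $r_2\in S$, i.e. $S_2\in C$ — this gives item (2c).

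The main obstacle, and the place requiring the most care, is bookkeeping the interaction between the "$r$ is dominated" requirement and the "$r$ has at most $k$ $S$-neighbors" requirement simultaneously across the two pieces, since in $T_1$ the root $r$ need not yet be dominated (in type $D$ it has no $S_1$-neighbor at all) nor satisfy the $\le k$ bound on its own — these deficiencies must be exactly repaired by the single edge to $r_2$. Concretely, one has to verify in each subcase that the count $|N_{T_1}(r)\cap S_1|$ plus the contribution $[r_2\in S]$ lands in the correct regime ($=k$ for type $A$, $\le k-1$ for type $B$), and that when $S_1\in D$ the vertex $r$ becomes dominated (hence $r_2\in S$, $S_2\in C$) — and symmetrically that the condition imposed on $r_2$ from the $T$-side (namely that $r$ contributes to $r_2$'s neighbor count, or dominates $r_2$) is compatible with $S_2$'s declared type, which is why $A$ is sometimes excluded for $S_2$ (as in 1(a), 3) and sometimes allowed (as in 2(a)). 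Once these counting compatibilities are laid out subcase by subcase, both directions of each biconditional follow routinely; I would organize the write-up as a table of the six relevant root-configurations of $S$ on $T$ and check each against the three stated characterizations.
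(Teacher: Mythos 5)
Your outline is correct and follows essentially the same route as the paper's proof: split on whether $r_2\in S$, use the fact that only $r$ and $r_2$ gain neighbors under the composition so all other vertices inherit their condition, and count $|N_T(r)\cap S|=|N_{T_1}(r)\cap S_1|+[r_2\in S]$; the paper writes out only one direction of each equivalence (leaving the converse as an exercise), so your plan to check both directions is, if anything, more complete. One small slip in your commentary: $S_2\in A$ is \emph{not} excluded in case 1(a) --- there $r\notin S$, so $r$ adds nothing to $r_2$'s count and the statement indeed allows $S_2\in A\cup B$ (what is ruled out in 1(a) is $S_2\in C\cup D$); the exclusion of $A$ for $S_2$ occurs only in part 3, where $r\in S$.
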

\begin{proof}
Here we only prove the sufficiency, since the necessity is a simple exercise.
\begin{enumerate}
\item
    \begin{enumerate}
    \item Assume that $|N(r)\cap S_1|=k$ hence $r_2\notin S$ and let $S\in A$, hence $r_2\notin S$. Then, the edge $rr_2$ joins two vertices not in $S$ and therefore $S_1$ and $S_2$ are $\gamma_{\stackrel{}{1k}}$-sets of $T_1$ and $T_2$ respectively. If $S_2$ is a $\gamma_{\stackrel{}{1k}}$-set where $r_2\notin S_2$, then $S_2\in A\cup B$. On the other hand, all the neighbors of $r$ in $S$ belong to $S_1$, hence $|N(r)\cap S_1|=|N(r)\cap S|=k$ and so $S_1\in A$.
    \item Now assume that $|N(r)\cap S_1|=k-1$. Since $S\in A$, the root $r$ has $k$ neighbors in $S$, so it follows that $r_2\in S$. Note that all dominations in $V(T_2)\setminus S_2$ are exactly the same as in $V(T_2)\setminus S$, so $S_2$ is a $\gamma_{\stackrel{}{1k}}$-set of $T_2$ and therefore $S_2\in C$. On the other hand, although $r$ is not dominated by $r_2$ in $T_1$, it has $k-1$ neighbors in $S_1$ so $S_1$ is a $\gamma_{\stackrel{}{1k}}$-set of $T_1$ and therefore $S_1\in B$.
    \end{enumerate}
\item
    \begin{enumerate}
    \item Suppose that $|N(r)\cap S_1|=k-1$ and $S\in B$. So the $k-1$ neighbors of $r$ belong to $S_1$ and thus $r_2\notin S_2$. Consequently, both $S_1$ and $S_2$ are $\gamma_{\stackrel{}{1k}}$-set of $T_1$ and $T_2$ respectively. Hence $S_1\in B$ and $S_2\in A\cup B$.
    \item Let $|N(r)\cap S_1|\leq k-2$ and suppose that $1\leq |N(r)\cap S_1|$ then $S_1$ is a $\gamma_{\stackrel{}{1k}}$-set of $T_1$ and therefore $S_1\in B$. Clearly $S_2$ is a $\gamma_{\stackrel{}{1k}}$-set of $T_2$ therefore $S_2\in A\cup B\cup C$.

    \item Now if $|N(r)\cap S_1|\leq k-2$ and $|N(r)\cap S_1|=0$ then $S_1$ is a $\gamma_{\stackrel{}{1k}}$-set of $T_1-r$, hence $S_1\in D$. In this case $r_2\in S_2$ and $S_2$ is a $\gamma_{\stackrel{}{1k}}$-set of $T_2$ with $S_2\in  C$.
    \end{enumerate}
\item Let $S\in C$. Any vertex in $V(T_1)\setminus S_1$ is dominated by the same vertices as in $S$, so $S_1$ is a $\gamma_{\stackrel{}{1k}}$-set of $T_1$ and $S_1\in C$. However $r_2$ may or may not belong to $S$. In the former case, we can reason analogously as above and conclude that $S_2\in C$. In the later case, all the vertices in $V(T_2)\setminus S_2$ except $r_2$ are dominated by at least one and at most $k$ vertices in $S_2$, and $r_2$ by at most $k-1$ vertices. If $r_2$ is dominated by some vertex in $S_2$ then $S_2$ is a $\gamma_{\stackrel{}{1k}}$-set in $B$. Otherwise, $S_2$ is a $\gamma_{\stackrel{}{1k}}$-set of $T_2-r_2$ in $D$.

\end{enumerate}
\end{proof}

\begin{prop}
Let $(T,r)=(T_1,r)\circ (T_2,r_2)$ be a rooted tree which is the composition of two rooted trees, and let $S$ be a subset of its vertices. We denote $S_1=S\cap V(T_1)$ and $S_2\cap V(T_2)$. Then $S\in D$ if and only if $S_1\in D$ and $S_2\in A\cup B$.
\end{prop}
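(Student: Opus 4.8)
The statement characterizes when a subset $S$ of the composed tree $T=(T_1,r)\circ(T_2,r_2)$ lies in type $D$, i.e.\ when $S$ is a $\gamma_{\stackrel{}{1k}}$-set of $T-r$ with $N[r]\cap S=\emptyset$. The plan is to argue both implications by carefully tracking which vertices dominate which, exactly as in the proof of Proposition~\ref{th:sets}; the situation here is in fact simpler because the hypothesis $N[r]\cap S=\emptyset$ forces a lot of structure immediately.

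First I would prove sufficiency. Assume $S_1\in D$ and $S_2\in A\cup B$. From $S_1\in D$ we get $r\notin S_1$ and no neighbor of $r$ inside $T_1$ belongs to $S_1$; since $S_2\in A\cup B$ we also have $r_2\notin S_2$, so $r_2\notin S$. Hence $N[r]\cap S=\emptyset$ in $T$, and $S=S_1\cup S_2$ is a subset of $V(T-r)=V(T_1-r)\cup V(T_2)$. It remains to check that $S$ dominates $T-r$ correctly. The graph $T-r$ is the disjoint union of $T_1-r$ and $T_2$ together with nothing extra joining them (the only edge incident to $r$ was $rr_2$, which is removed together with $r$). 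So a vertex $u\in V(T_1-r)\setminus S_1$ has the same neighborhood in $T-r$ as in $T_1-r$, hence is dominated by between $1$ and $k$ vertices of $S_1$ because $S_1\in D$; and a vertex $u\in V(T_2)\setminus S_2$ has the same neighborhood in $T-r$ as in $T_2$, hence is dominated by between $1$ and $k$ vertices of $S_2$ because $S_2\in A\cup B$ makes $S_2$ a $\gamma_{\stackrel{}{1k}}$-set of $T_2$. Therefore $S\in D$.

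For necessity, assume $S\in D$, so $N[r]\cap S=\emptyset$ and $S$ is a $\gamma_{\stackrel{}{1k}}$-set of $T-r$. In particular $r_2\notin S$, hence $r_2\notin S_2$. Since $r\notin S$ and no vertex of $T_1$ adjacent to $r$ is in $S$ (all such lie in $N(r)$), we get $N[r]\cap S_1=\emptyset$; and because $T_1-r$ is an induced subgraph of $T-r$ with the same vertex set $V(T_1)\setminus\{r\}$ and same edges, every $u\in V(T_1-r)\setminus S_1$ is dominated in $T_1-r$ by exactly the same vertices as in $T-r$, between $1$ and $k$ of them, so $S_1$ is a $\gamma_{\stackrel{}{1k}}$-set of $T_1-r$ and $S_1\in D$. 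Similarly $T_2$ is an induced subgraph of $T-r$ on vertex set $V(T_2)$ and, since the deleted edge $rr_2$ had its endpoint $r$ outside $S$ while $r_2$'s other neighbors are unchanged, every $u\in V(T_2)\setminus S_2$ is dominated in $T_2$ by the same vertices as in $T-r$; hence $S_2$ is a $\gamma_{\stackrel{}{1k}}$-set of $T_2$, and combined with $r_2\notin S_2$ this gives $S_2\in A\cup B$. This completes the argument.

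The only subtlety worth stating carefully — and the one place I would be most careful — is the remark that removing the vertex $r$ from $T$ removes precisely the edge $rr_2$ and nothing else, so that $T-r$ genuinely splits as the disjoint union of $T_1-r$ and $T_2$; once that is noted, every domination count transfers verbatim between $T-r$ and its two pieces, and neither direction requires any case analysis on $|N(r)\cap S_i|$, unlike in Proposition~\ref{th:sets}. So there is no real obstacle here beyond bookkeeping.
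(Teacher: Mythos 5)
Your proof is correct and takes essentially the same route as the paper's: the only edge joining the two parts of the composition is $rr_2$, so $T-r$ decomposes as the disjoint union of $T_1-r$ and $T_2$ and all domination counts transfer verbatim; you moreover verify the converse implication, which the paper leaves to the reader. (One phrase is imprecise---deleting $r$ removes every edge incident to $r$, not only $rr_2$---but those extra edges also disappear in forming $T_1-r$, so the decomposition you actually use is still valid and no gap results.)
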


\begin{proof}
We only prove the sufficiency. Suppose $S\in D$, i.e., all the vertices in $T$ except $r$ are dominated by at least one and at most $k$ vertices in $S$. Therefore, $S_1$ inherits this property for $T_1$ and $S_1\in D$. On the other hand, $S_2$ should be a $\gamma_{\stackrel{}{1k}}$-set for $T_2$. Since $r$ is not dominated in $S$, the vertex $r_2$ does not belong to $S$, hence $S_2\in A\cup B$.
\end{proof}

In the algorithm, we assume that the vertices of the tree have been numbered from $1$ to $n$ such that all vertices have a greater number than its parent. The tree is stored in the array \texttt{Parent} in which any vertex \texttt{i} points to the location of its parent. At any time of the execution of the second loop, the four variables called \texttt{a(i), b(i), c(i)} and \texttt{d(i)} store the minimum cardinalities of sets of type $A, B, C$ and $D$ for the trees having \texttt{i} as root and previously processed vertices. Any of this variables might be infinite due to either it is not possible to find such sets or there exists a set of different type with the same cardinality. Those variables are initialized with the values corresponding to $K_1$. It is not difficult to modify the algorithm in order to keep track of the final $\gamma_{\stackrel{}{1k}}$-code.

It is necessary to use a fifth variable \texttt{z(i)} to decide between the two possible options for the cardinal of type $B$ sets given in Theorem~\ref{th:sets}. Specifically, \texttt{z(i)} is defined as $|N(i)\cap S|$ where $|S|$ has finite cardinality \texttt{b(i)}, and to keep internal consistency \texttt{z(i)} will be $\infty$ whenever \texttt{b(i)} is infinite.

Note that any $\gamma_{\stackrel{}{1k}}$-set of a rooted tree $T$ is in $A\cup B\cup C$, thus the resulting $\gamma_{\stackrel{}{1k}}$-code will have as cardinality the minimum value among \texttt{a(1), b(1), c(1)}.
\medskip

\noindent \texttt{Algorithm $\gamma_{\stackrel{}{1k}}$ for trees}\\
\noindent \hspace*{5 mm}\texttt{Input:the parent array Parent[1\ldots n] for any tree T}\\
\noindent \hspace*{5 mm}\texttt{Output: $\gamma_{\stackrel{}{1k}}$(T)}\\
\noindent \texttt{begin}\\
\noindent \hspace*{5 mm}\texttt{for i:=1\ldots n do}\\
\noindent \hspace*{10 mm}\texttt{initialize a(i):=$\infty$; b(i):=$\infty$; c(i):=1; d(i):=0; z(i):=$\infty$}\\
\noindent \hspace*{5 mm}\texttt{od}\\
\noindent \hspace*{5 mm}\texttt{for i:=n\ldots 2 do}\\
\noindent \hspace*{10 mm}\texttt{j:=Parent[i]; z:=z(j)}\\
\noindent \hspace*{10 mm}\texttt{a:=min(a(j)+a(i),a(j)+b(i));}\\
\noindent \hspace*{10 mm}\texttt{if a>b(j)+c(i) and z(j)==k-1 then}\\
\noindent \hspace*{15 mm}\texttt{a:=b(j)+c(i)}\\
\noindent \hspace*{10 mm}\texttt{fi}\\
\noindent \hspace*{10 mm}\texttt{b:=min(b(j)+a(i),b(j)+b(i));}\\
\noindent \hspace*{10 mm}\texttt{if b>d(j)+c(i) then}\\
\noindent \hspace*{15 mm}\texttt{b:=d(j)+c(i);}\\
\noindent \hspace*{15 mm}\texttt{z:=1}\\
\noindent \hspace*{10 mm}\texttt{fi}\\
\noindent \hspace*{10 mm}\texttt{if b>b(j)+c(i) and z(j)$\leq$ k-2 then}\\
\noindent \hspace*{15 mm}\texttt{b:=b(j)+c(i);}\\
\noindent \hspace*{15 mm}\texttt{z:=z(j)+1}\\
\noindent \hspace*{10 mm}\texttt{fi}\\
\noindent \hspace*{10 mm}\texttt{if b==$\infty$ then}\\
\noindent \hspace*{15 mm}\texttt{z:=$\infty$}\\
\noindent \hspace*{10 mm}\texttt{fi}\\
\noindent \hspace*{10 mm}\texttt{c:=min(c(j)+b(i),c(j)+c(i),c(j)+d(i));}\\
\noindent \hspace*{10 mm}\texttt{d:=min(d(j)+a(i),d(j)+b(i))}\\
\noindent \hspace*{10 mm}\texttt{a(j):=a; b(j):=b; c(j):=c; d(j):=d; z(j):=z;}\\
\noindent \hspace*{5 mm}\texttt{od}\\
\noindent \hspace*{5 mm}\texttt{$\gamma_{\stackrel{}{1k}}$(T):=min(a(1),b(1),c(1));}\\
\noindent \texttt{end.}

In Figure~\ref{fig:tree_example} is shown an example of the output of the algorithm for $k=3$. The vertices of the tree are labelled as in the initial order. It is also shown the final values of the variables \texttt{a(i), b(i), c(i), d(i)} and \texttt{z(i)} for the internal vertices.

\begin{figure}[htbp]
\begin{center}
\includegraphics[width=6.5cm]{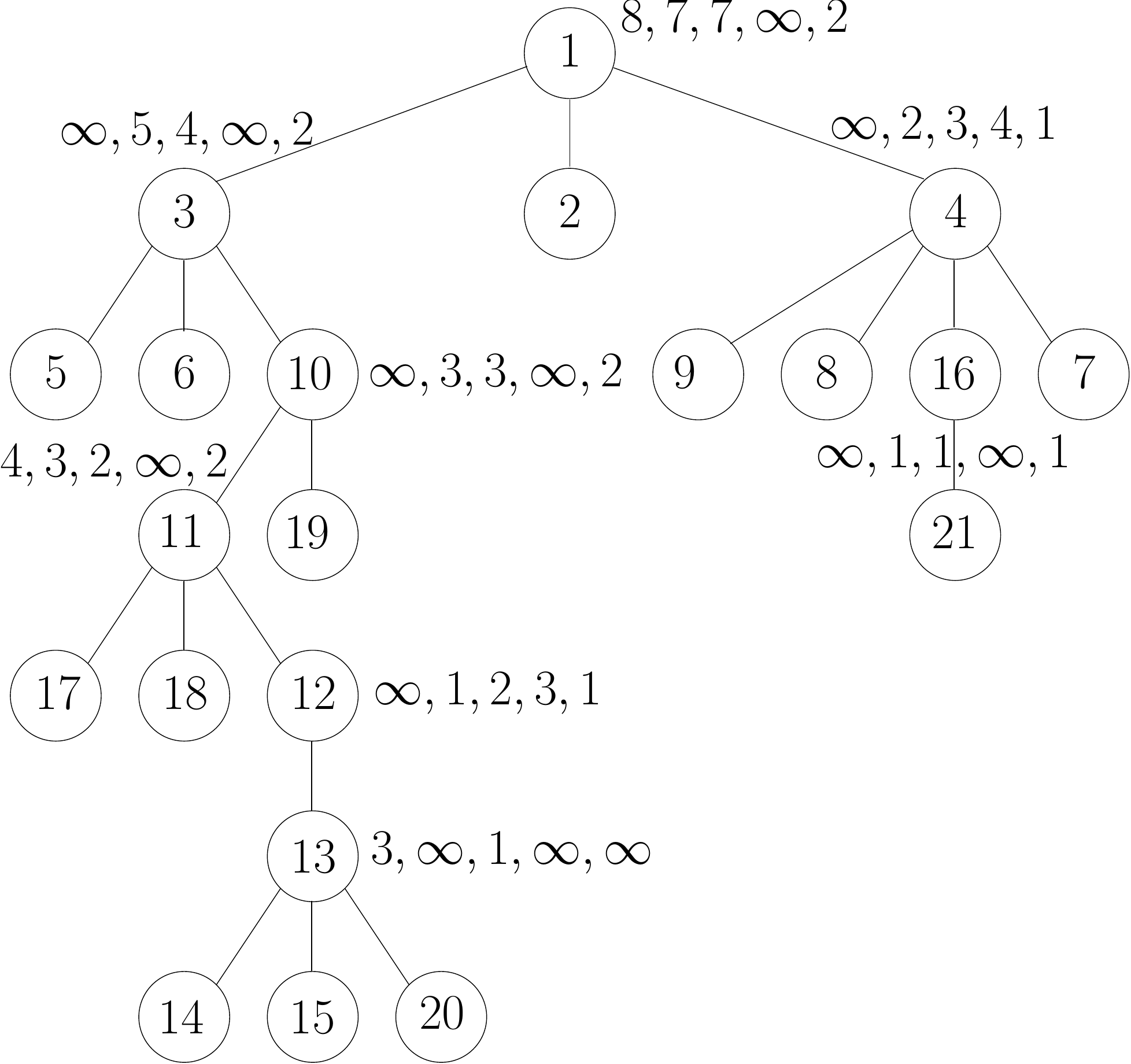}
\end{center}
\caption{An example of the output of the algorithm $\gamma_{\stackrel{}{1k}}$ for trees and $k=3$. For instance, \texttt{a(10)=$\infty$, b(10)=3, c(10)=3, d(10)=$\infty$} and \texttt{z(10)=2}.}
\label{fig:tree_example}
\end{figure}

\begin{thm}
For any tree $T$ with $n$ vertices, $\gamma_{\stackrel{}{1k}}(T)$ can be computed in linear time.
\end{thm}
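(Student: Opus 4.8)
The plan is to show that the algorithm performs a constant amount of work per vertex and that its output is correct; linearity then follows immediately from the structure of the two loops.

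First I would observe that the total running time is manifestly $O(n)$: the initialization loop makes $n$ iterations of constant cost, and the main loop runs from $i=n$ down to $2$, performing in each iteration a fixed number of array lookups, additions, comparisons and assignments (each \texttt{min} over at most three terms), so exactly $n-1$ iterations of constant cost. Reading the \texttt{Parent} array and returning \texttt{min(a(1),b(1),c(1))} is also $O(1)$ per step and $O(1)$ at the end. Hence the time bound is linear; what actually requires proof is that the quantities \texttt{a(j), b(j), c(j), d(j)} computed by the algorithm equal, at the moment vertex $j$ has absorbed all of its already-processed children, the minimum cardinalities of $\gamma_{\stackrel{}{1k}}$-sets of types $A$, $B$, $C$ and $D$ for the subtree rooted at $j$ consisting of $j$ together with the branches hanging from its processed children.

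The correctness proof is an induction on the number of composition steps, using Proposition~\ref{th:sets} and the proposition immediately following it as the inductive engine. The base case is $K_1$: a single vertex $r$ is a $\gamma_{\stackrel{}{1k}}$-set with $r\in S$ (type $C$, cardinality $1$), is not of type $A$ or $B$ (no $\gamma_{\stackrel{}{1k}}$-set of $K_1$ avoids $r$), and the empty set is a $\gamma_{\stackrel{}{1k}}$-set of $K_1-r$ with $N[r]\cap S=\emptyset$ (type $D$, cardinality $0$); this matches the initialization \texttt{a:=}$\infty$, \texttt{b:=}$\infty$, \texttt{c:=1}, \texttt{d:=0}. Because the ordering guarantees every vertex precedes its parent, processing $i$ and setting \texttt{j:=Parent[i]} realizes exactly the composition $(T',j)\circ(T_i,i)$, where $T'$ is the currently accumulated subtree at $j$ and $T_i$ is the already fully-processed subtree at $i$; by the inductive hypothesis the child variables \texttt{a(i),b(i),c(i),d(i)} are the correct optima for $T_i$, and the update formulas in the algorithm are precisely the minimum-cardinality translation of the disjunctive characterizations in Proposition~\ref{th:sets} (cases 1(a),1(b) feed \texttt{a}; cases 2(a),2(b),2(c) feed \texttt{b}; case 3 feeds \texttt{c}) together with the $D$-recursion ($S_1\in D$, $S_2\in A\cup B$) feeding \texttt{d}. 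The auxiliary variable \texttt{z(j)} records $|N(j)\cap S|$ for a minimum type-$B$ set, which is exactly the side information needed to decide whether the guards ``$z(j)==k-1$'' (case 1(b)) and ``$z(j)\le k-2$'' (case 2(b)/2(c)) apply; one checks that the branch lowering \texttt{a} via \texttt{b(j)+c(i)} corresponds to 1(b), the branch lowering \texttt{b} via \texttt{d(j)+c(i)} to 2(c) (whence \texttt{z:=1}), and the branch lowering \texttt{b} via \texttt{b(j)+c(i)} to 2(b) (whence \texttt{z:=z(j)+1}), and that \texttt{z} is reset to $\infty$ exactly when no finite type-$B$ set exists. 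After the root $1$ has absorbed all its children, \texttt{a(1),b(1),c(1),d(1)} are the optima for $T$ itself, and since every $\gamma_{\stackrel{}{1k}}$-set of $T$ lies in $A\cup B\cup C$, the returned value \texttt{min(a(1),b(1),c(1))} equals $\gamma_{\stackrel{}{1k}}(T)$.

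The main obstacle is the bookkeeping around the variable \texttt{z} and the delicate interaction between the ``$\le k-1$'' versus ``$=k$'' conditions in the definitions of $A$ and $B$: one must verify that retaining a single witness value of $|N(j)\cap S|$ for a minimum type-$B$ set never causes the algorithm to miss a cheaper admissible set later, i.e.\ that among all minimum-cardinality type-$B$ sets it suffices to remember one value of $|N(j)\cap S|$ rather than the whole set of attainable values. This is where a careful case analysis is needed: when two type-$B$ sets of equal minimum size have different $|N(j)\cap S|$, one argues that the smaller such value is never worse for any subsequent guard, so choosing it (as the update order in the code effectively does, with the \texttt{d(j)+c(i)} branch giving $z=1$ and the \texttt{b(j)+c(i)} branch only invoked when $z(j)\le k-2$) preserves optimality. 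Everything else — disjointness of vertex sets under composition, the fact that deleting a pendant edge to an undominated root does not change the domination counts of the other vertices — is routine and already implicit in the two propositions.
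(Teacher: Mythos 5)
On the statement as the paper actually proves it, your first paragraph is already the entire argument: the paper's proof consists solely of the observation that the second loop is executed a linear number of times with constant work per iteration, correctness being delegated to the description of the variables and to Proposition~\ref{th:sets} and the subsequent proposition on type~$D$ sets. So on the runtime side you coincide with the paper, and your added correctness induction (base case $K_1$ matching the initialization, each iteration of the loop realizing one composition $(T',j)\circ(T_i,i)$ with the child subtree already fully processed, the update formulas implementing the case analysis of Proposition~\ref{th:sets}) is a faithful expansion of what the paper leaves implicit; its skeleton is sound and goes beyond the paper's two-line proof.

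However, the step you yourself single out as the main obstacle is resolved incorrectly. You claim that among minimum-cardinality type-$B$ sets it suffices to remember the smaller value of $|N(j)\cap S|$ because ``the smaller such value is never worse for any subsequent guard.'' That is false for the type-$A$ update: its guard is \texttt{z(j)==k-1}, an equality test coming from case~1(b) of Proposition~\ref{th:sets}, which demands exactly $k-1$ neighbours of the root inside $S_1$. If two minimum type-$B$ sets of the current subtree have, say, $|N(j)\cap S|=1$ and $|N(j)\cap S|=k-1$, then storing $z=1$ makes the algorithm skip the transition \texttt{a:=b(j)+c(i)} even though a valid type-$A$ set of that cardinality exists; symmetrically, storing $z=k-1$ blocks the branch \texttt{b:=b(j)+c(i)} guarded by \texttt{z(j)}$\leq$\texttt{k-2}. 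So monotonicity in $z$ does not provide the exchange argument you need. A correct treatment must either track more information (for instance the set of attainable values of $|N(j)\cap S|$ over minimum type-$B$ sets, or separate optima indexed by that count), or establish a structural lemma showing that whenever the blocked branch would have produced the optimum, the same value is reached through another branch. As it stands this is a genuine gap in your correctness argument---although, to be fair, it concerns a point that the paper's own one-line proof of this theorem does not address either.
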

\begin{proof}
Clearly, the second loop is iterated $n$ times and the operations within the loop can be computed in constant time.
\end{proof}

\noindent
{\bf Acknowledgements.} Authors are partially support by MTM2012-30951/FEDER, MTM2011-28800-C02-01, Gen. Cat. DGR 2014SGR46 , Gen. Cat. DGR 2009SGR1387 and JA-FQM 305.

%\newpage
%%%%%%%%%%%

\vspace{1cc}

%\newpage

{\small
\noindent
Department of Mathematics\\
University of Almer\'ia. Almer\'ia, Spain\\
E-mail: \url{jcaceres@ual.es, mpuertas@ual.es} \\

\noindent
Department of Applied Mathematics I\\
Politechnical University of Catalu\~na. Barcelona, Spain\\
E-mail: \url{carmen.hernando@upc.edu}\\

\noindent
Department of Applied Mathematics II,\\
Politechnical University of Catalu\~na. Barcelona, Spain \\
E-mail: \url{merce.mora@upc.edu}\\

\noindent
Department of Applied Mathematics III,\\
Politechnical University of Catalu\~na. Barcelona, Spain \\
E-mail: \url{ignacio.m.pelayo@upc.edu}\\

}

\end{document}